\newcommand{\N}{\mathbb{N}}
\newcommand{\Z}{\mathbb{Z}}
\newcommand{\ch}{\mathrm{Chomp}}
\newcommand{\mC}{\mathcal{C}}
\newcommand{\mK}{\mathcal{K}}
\newcommand{\KG}{\mathcal{KG}}
\newcommand{\J}{\mathcal{J}}
\theoremstyle{plain}
\newtheorem{theorem}{Theorem}[section]
\newtheorem{lemma}[theorem]{Lemma}
\newtheorem{corollary}[theorem]{Corollary}
\newtheorem{proposition}[theorem]{Proposition}
\theoremstyle{definition}
\newtheorem{definition}[theorem]{Definition}
\newtheorem{remark}[theorem]{Remark}
\title{Chomp on generalized Kneser graphs and others}
\author[I. Garc\'{i}a-Marco]{Ignacio Garc\'{i}a-Marco}
\author[K. Knauer]{Kolja Knauer}
\author[L. P. Montejano]{Luis Pedro Montejano}
\address{Facultad de Ciencias, Universidad de La Laguna. La Laguna, Spain.}
\email{iggarcia@ull.es}
\address{Aix Marseille Univ, Universit\'e de Toulon, CNRS, LIS, Marseille, France}
\email{kolja.knauer@lis-lab.fr}
\address{CONACYT Research Fellow - Centro de Investigaci\'on en Matem\'aticas, Guanajuato, GTO, M\'exico}
\email{luis.montejano@cimat.mx}
\keywords{Chomp, generalized Kneser graphs, Johnson graphs, Threshold graphs, clique complex}
\begin{document}

\begin{abstract}
In chomp on graphs, two players alternatingly pick an edge or a vertex from a graph. The player that cannot move any more loses.
The questions one wants to answer for a given graph are: Which player has a winning strategy? Can a explicit strategy be devised?
We answer these questions (and determine the Nim-value) for the class of generalized Kneser graphs and for several families of Johnson graphs. We also generalize some of these results to the clique complexes of these graphs. Furthermore, we determine which player has a winning strategy for some classes of threshold graphs.
\end{abstract}

\maketitle

\section{Introduction}\label{introduction}

Let $P$ be a partially ordered set with a global minimum $0$. In the \emph{game of chomp} on $P$ (also know as \emph{poset game}), two players $A$ and $B$ alternatingly pick an element of $P$ with $A$ being the first player. Whoever is forced to pick $0$ loses the game. A move consists of picking an element $x \in P$ and removing its {\it up-set}, that is,
all the elements that are larger or equal to $x$. The questions one wants to answer for a given $P$ are: \begin{center}

Has either of the players a winning strategy? Can a strategy be devised explicitly?                                                                                                         \end{center}

An easy and well-known observation with respect to the first of these questions is the following:

\begin{remark}\label{ganaAmax} If $P$ is a finite poset with a global maximum $1$, then player $A$ has a winning strategy. This can be proved with an easy (non-constructive) strategy stealing argument.
Indeed, if $A$ starting with $1$ cannot be extended to a winning strategy, then  $B$ has a devastating reply $x \in P$. But in this case, $A$ wins starting with $x$.
\end{remark}

One of the most well-known and probably oldest games that is an instance of chomp is Nim~\cite{Bouton}, where $P$ consists of a disjoint union of chains plus a global minimum. The first formulation in terms of posets is due to Schuh~\cite{Schuh}, where the poset is that of all divisors of a fixed number $N$, with $x$ below $y$ when $y|x$. A popular special case of this is the chocolate-bar-game introduced by Gale~\cite{Gale}, where $P$ is a finite grid. Another variant is to play chomp on the Boolean lattice, i.e., the inclusion order on all subsets of an $n$-element set. It was conjectured by Gale and Neyman in the 80s~\cite{GN}, that here taking the maximum element is always a good first move. After this was verified for $n\leq 6$ in the 90s~\cite{CT}, it was shown that the conjecture fails for $n=7$~\cite{BC}.
Recently, chomp was studied in \cite{chompsemigroups} for infinite posets arising from numerical semigroups and several algebraic properties could be used to establish winning strategies.
There is a rich body of research on chomp with respect to different classes of posets. For more information on the game and its history, we refer to~\cite{BCG,web,fr}.

This paper concerns chomp on (finite) simplicial complexes partially ordered by inclusion and we mostly investigate the chomp game on graphs, where the graph is regarded as a simplicial complex. Hence, the players take turns to remove either an edge or a vertex (and all its incident edges), and the player who cannot move because the remaining graph is the empty graph, loses. The game of chomp on graphs has been studied in \cite{KY,O}. In \cite{KY} the authors provide the Nim-value of bipartite graphs, complete multipartite graphs (see Theorem \ref{compmultipartito}), some families of pseudotrees, and state some conjectures concerning the Nim-values of pseudotrees. In \cite{O}, the author proves some of these conjectures and obtains the Nim-values of some other families of graphs, including some wheels and fans. Moreover, the (simple) pseudo-forests that are second-player win are characterized.

{\bf Our results.} After introducing some basics, in Section~\ref{sec:genKne}  we provide an explicit formula for the Nim-values of generalized Kneser graphs (Theorem \ref{th:genKneser}).  Also in this section we are able to decide which player has a winning
strategy for the chomp game on the clique complex of generalized Kneser graphs (Corollary \ref{th:genKneserclique}). In Section~\ref{sec:John} we study Johnson graphs and provide a formula for their Nim-values under certain hypotheses (Propositions \ref{nkpares} and \ref{n2k}). In Proposition \ref{noinvJohnson} we provide a negative result showing that our methods cannot be pushed forward to compute the Nim-value of every Johnson graph. Whenever we are able to obtain the Nim-value of a Johnson graph we prove that this value equals the one of its clique complex (Proposition \ref{cliqueJohnson}). 
Finally, in Section~\ref{sec:threshold} we study certain families of threshold graphs.

Whenever we are able to decide the outcome of the chomp game, we are also able to devise an explicit winning strategy. The sole exception is Corollary \ref{th:genKneserclique}, whose proof relies on the non-constructive strategy stealing argument of Remark \ref{ganaAmax}, and we do not know an alternative constructive proof.

We finish the paper with some concluding remarks in Section~\ref{sec:conc}.

\subsection{A little notation}

When playing chomp on finite posets, the game finishes after a finite number of moves and, since there are no draws, one of the players has a winning strategy (this is a particular case of the classical Zermelo's theorem in game theory, see, e.g., \cite{Zermelo}). For a given poset $P$ with a minimum, we denote $\ch(P) \in\{A,B\}$ as follows: $\ch(P) = A$ if the first player to move on $P$ has a winning strategy, and $\ch(P) = B$ otherwise. When $\ch(P) = A$, we say that $P$ is a {\it winning position}, otherwise we say that $P$ is a {\it losing position}.

Given $P_1$ and $P_2$ two posets with a global minimum each, we denote by $P_1 \cup_0 P_2$ the poset obtained by identifying both minima and without any extra comparability. Whenever $\ch(P_1) = B$, then it is easy to check that $\ch(P_1 \cup_0 P_2) = \ch(P_2)$. However, when $\ch(P_1) = \ch(P_2) = A$, then $\ch(P_1 \cup_0 P_2)$ can be either $A$ or $B$. To handle this situation and to be able to decide which player has a winning strategy on $P_1 \cup_0 P_2$, it is convenient to introduce the concept of Nim-value of a poset $P$, denoted by ${\rm Nim}(P)$. This value is defined inductively as follows: ${\rm Nim}(P) = 0$ if $P = \{0\}$, or $${\rm Nim}(P) =  {\rm mex}\{{\rm Nim}(P_x) \, \vert \, x \in P \setminus \{0\}\} \in \N,$$ where $P_x$ denotes the poset obtained after removing the up-set of $x \in P$, and for a finite $A \subset \N$,  ${\rm mex}(A)$ denotes the minimum excluded value of $A$, i.e., the smallest value $n \in \N$ such that $n \notin A$. From this definition it is not difficult to see that \begin{center} $\ch(P) = B$ if and only if ${\rm Nim}(P) = 0$. \end{center} Indeed, if $P$ is a poset with ${\rm Nim}(P) = 0$ and one player plays on $P$, the resulting poset is $P_x$ for some $x \in P$ and, by definition, ${\rm Nim}(P_x) \neq 0$. Conversely, if ${\rm Nim}(P) \neq 0$,  there exists an element $x \in P$ such that ${\rm Nim}(P_x) = 0$, so it suffices to choose such an $x$ to devise a winning strategy. The main interest of knowing the Nim-value is explained in the following classic result of Sprague and Grundy.

\begin{theorem}\cite{gru,spr}  \label{th:nimsum}Let $P_1$ and $P_2$ be two posets with a unique minimum. Then,
$${\rm Nim}(P_1 \cup_0 P_2) = {\rm Nim}(P_1) \oplus {\rm Nim}(P_2),$$ where $a \oplus b$ is the nonnegative integer whose binary encoding is the binary XOR operation on the binary encodings of $a$ and $b$.
\end{theorem}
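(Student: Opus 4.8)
The plan is to argue by induction on $|P_1| + |P_2|$, after the key observation that chomp on $P_1 \cup_0 P_2$ is exactly the disjoint sum of the games on $P_1$ and $P_2$. Indeed, since there are no comparabilities between $P_1 \setminus \{0\}$ and $P_2 \setminus \{0\}$, removing the up-set of a non-minimal $x \in P_1$ from $P_1 \cup_0 P_2$ produces the poset $(P_1)_x \cup_0 P_2$, and symmetrically for a move in $P_2$; picking the common minimum $0$ ends the game. The base case is $P_1 = P_2 = \{0\}$, where both sides of the claimed identity are $0$.

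For the inductive step, write $g_1 = {\rm Nim}(P_1)$, $g_2 = {\rm Nim}(P_2)$ and $g = g_1 \oplus g_2$. The one-move successors of $P_1 \cup_0 P_2$ are the posets $(P_1)_x \cup_0 P_2$ with $x \in P_1 \setminus \{0\}$ and $P_1 \cup_0 (P_2)_y$ with $y \in P_2 \setminus \{0\}$, each of strictly smaller total size, so the induction hypothesis yields ${\rm Nim}((P_1)_x \cup_0 P_2) = {\rm Nim}((P_1)_x) \oplus g_2$ and the symmetric statement on the $P_2$ side. By the recursive definition of the Nim-value it then suffices to show (i) that no successor has Nim-value $g$, and (ii) that for every $k$ with $0 \le k < g$ some successor has Nim-value exactly $k$.

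Part (i) is immediate: if ${\rm Nim}((P_1)_x) \oplus g_2 = g_1 \oplus g_2$ then ${\rm Nim}((P_1)_x) = g_1$, which is impossible since $g_1 = {\rm mex}\{{\rm Nim}((P_1)_x) : x \in P_1 \setminus \{0\}\}$ omits the value $g_1$; the $P_2$ case is symmetric. For part (ii), fix $k < g$, put $d = k \oplus g$, and let $t$ be the position of the most significant set bit of $d$. Since $k < g$, bit $t$ of $g$ equals $1$, hence exactly one of $g_1, g_2$ has bit $t$ set; say it is $g_1$. Then $d \oplus g_1 < g_1$, because XOR-ing with $d$ leaves all bits of $g_1$ above position $t$ unchanged and turns bit $t$ from $1$ to $0$. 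As $d \oplus g_1 < g_1 = {\rm mex}\{{\rm Nim}((P_1)_x)\}$, there is an $x \in P_1 \setminus \{0\}$ with ${\rm Nim}((P_1)_x) = d \oplus g_1$, and then ${\rm Nim}((P_1)_x \cup_0 P_2) = (d \oplus g_1) \oplus g_2 = d \oplus g = k$. Combining (i) and (ii) gives ${\rm Nim}(P_1 \cup_0 P_2) = {\rm mex}$ of its successors' Nim-values $= g$, completing the induction.

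The only genuinely non-routine ingredient is the bit-counting in part (ii) (together with the dual triviality of (i)): the fact that XOR interacts with ${\rm mex}$ in precisely the right way. Everything else is bookkeeping with the recursion defining ${\rm Nim}$ and with the reduction of $\cup_0$ to a disjoint sum of games after a single move; well-definedness of all the Nim-values involved follows from finiteness of the posets, as already noted via Zermelo's theorem.
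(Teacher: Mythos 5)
Your proof is correct. Note that the paper itself gives no proof of this statement: it is the classical Sprague--Grundy theorem, quoted with a citation to Grundy and Sprague, so there is no internal argument to compare against. What you wrote is exactly the standard proof of that classical result, transported to the poset setting: you correctly observe that, because $P_1 \cup_0 P_2$ has no comparabilities between $P_1\setminus\{0\}$ and $P_2\setminus\{0\}$, a move at a non-minimal element of one side leaves the other side untouched, so the game is the disjoint sum of the two component games; the induction on $|P_1|+|P_2|$, the exclusion argument in (i) via the definition of ${\rm mex}$, and the most-significant-bit argument in (ii) showing every value $k<g_1\oplus g_2$ is realized by a successor are all sound and complete. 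In short, you have supplied a correct self-contained proof of a result the paper only cites.
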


\section{Chomp on generalized Kneser graphs}\label{sec:genKne}

The goal of this section is to compute the Nim-value of generalized Kneser graphs.

\begin{definition} For every triplet $(n,k,l) \in \N \times \Z^2$, the generalized Kneser graph $\KG(n,k,l)$ is the graph whose vertices correspond to the $k$-element subsets of the set $[1,n] := \{1,\ldots,n\}$, and where two vertices are adjacent if and only if the two corresponding sets intersect in at most $l$ elements (see Figure \ref{fig:petersen} for an example).
\end{definition}
\begin{figure}
\includegraphics[scale=.6]{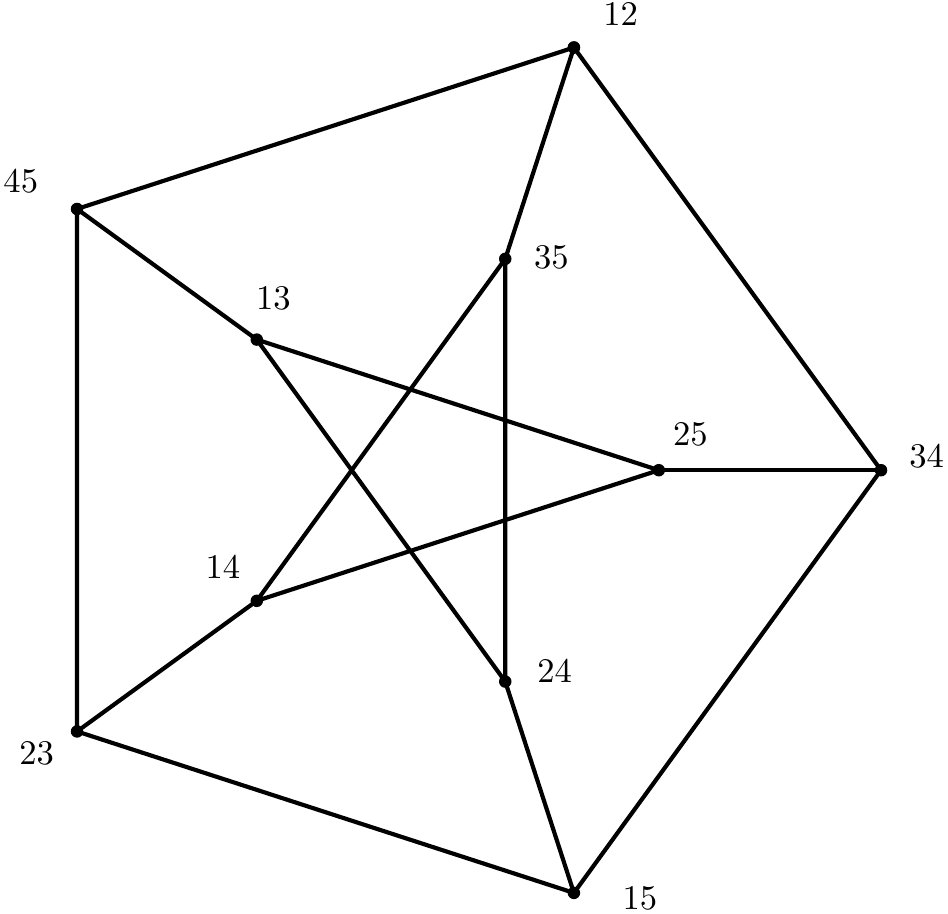}
\caption{$\KG(5,2,0)$ is the Petersen graph.}\label{fig:petersen}
\end{figure}

Classical Kneser graphs correspond to $l = 0$ in this definition. For convenience we have defined generalized Kneser graphs for all triplets $(n,k,l) \in \N \times \Z^2$; however, by definition we have the following.

\begin{lemma}\label{trivial} Let $(n,k,l) \in \N \times \Z^2$. Then,
\begin{itemize} \item[(a)] $\KG(n,k,l)$ is the empty graph if and only if $k < 0$ or $n < k$.
\item[(b)] $\KG(n,k,l)$  has no edges if and only if it is the empty graph, $l < 0$ or $n < 2k-l$.
\end{itemize}
\end{lemma}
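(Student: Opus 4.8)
The plan is to verify both statements directly from the definition of $\KG(n,k,l)$, treating each as a routine combinatorial unraveling of when the relevant sets of $k$-subsets or of adjacent pairs are empty.

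For part (a), the vertex set of $\KG(n,k,l)$ is the collection of $k$-element subsets of $[1,n]$, so $\KG(n,k,l)$ is the empty graph precisely when there are no such subsets. There is at least one $k$-subset of $[1,n]$ if and only if $0 \le k \le n$; equivalently, there are none exactly when $k < 0$ or $k > n$, i.e.\ $n < k$. (Here I interpret $k = 0$ as giving the single vertex $\emptyset$, so the graph is nonempty; the case $n = 0$, $k = 0$ also gives one vertex.) This gives (a) immediately with no case analysis beyond observing that ``$k<0$ or $n<k$'' is the negation of ``$0 \le k \le n$''.

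For part (b), first note that if $\KG(n,k,l)$ is already the empty graph then it trivially has no edges, which handles one implication. For the rest, assume the vertex set is nonempty, i.e.\ $0 \le k \le n$ by part (a), and ask when no two (possibly equal — but distinct as vertices) $k$-subsets $S, T$ satisfy $|S \cap T| \le l$. If $l < 0$ the condition $|S\cap T|\le l$ can never hold, so there are no edges. If $l \ge 0$, I would exhibit, whenever $n \ge 2k - l$, two $k$-subsets meeting in exactly $\max(0, 2k-n)$ elements — or more simply, when $n \ge 2k-l$ choose $S$ and $T$ sharing exactly $\max(l, 2k-n)$ elements; one checks this overlap size is between $\max(0,2k-n)$ and $k$ and is $\le l$, so $ST$ is an edge. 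Conversely, if $n < 2k - l$ then any two $k$-subsets $S,T$ satisfy $|S\cap T| = |S| + |T| - |S\cup T| \ge 2k - n > l$, so no edge exists. Combining, for nonempty vertex set there are no edges iff $l<0$ or $n < 2k-l$, which together with the empty-graph case yields (b).

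The only mild subtlety — and the single ``obstacle'' worth care — is the bookkeeping of boundary cases in part (b): ensuring that when $n \ge 2k-l$ and $l \ge 0$ one really can choose two \emph{distinct} $k$-subsets with the prescribed intersection size (this requires $k \ge 1$, which follows since otherwise the only vertex is $\emptyset$ and there are no pairs at all; but if $k=0$ then $2k-l = -l \le 0 \le n$, the ``no edges'' condition $l<0$ fails for $l\ge 0$, yet the single-vertex graph indeed has no edges — so one must double-check that the stated equivalence still reads correctly, namely that $k=0,\ l\ge 0$ forces $n < 2k-l = -l \le 0$ to be false and hence we are \emph{not} in the right-hand side, consistent with there genuinely being no edges only because the graph has a single vertex, which is covered by the ``empty graph'' disjunct only when $n<k$, not here). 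I would resolve this by simply stating that for $k=0$ the graph is a single vertex with no edges and $l<0$ or $n<2k-l$ need not hold, so one reads the ``if and only if'' with the convention that a one-vertex graph is understood; alternatively, and more cleanly, I would restrict attention in the interesting direction to $k \ge 1$ and dispatch $k \le 0$ or $n < k$ via part (a). The rest is a one-line application of $|S \cap T| \ge |S| + |T| - n$.
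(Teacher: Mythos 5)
Your proposal is essentially the argument the paper leaves implicit: Lemma \ref{trivial} is stated without proof (``by definition we have the following''), and your direct verification -- (a) from the existence of $k$-subsets, (b) from the bound $|S\cap T|\ge |S|+|T|-|S\cup T|\ge 2k-n$ together with an explicit construction of two $k$-subsets meeting in $\max(0,2k-n)\le l$ elements when $n\ge 2k-l$ and $l\ge 0$ -- is exactly the intended routine unraveling, so in substance your proof is correct and matches the paper.

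One remark on your boundary-case discussion. You correctly observe that, read literally for all $(n,k,l)\in\N\times\Z^2$, the biconditional in (b) fails when $k=0$ and $l\ge 0$ (a single vertex $\emptyset$, hence no edges, while none of the three conditions on the right holds). The same phenomenon occurs in the other single-vertex case you did not flag: $k=n\ge 1$ with $l\ge k$ (e.g.\ $n=k=3$, $l=3$), where again there are no edges but $n<2k-l$ fails. Your construction of two distinct $k$-subsets needs $1\le k\le n-1$, not just $k\ge 1$, so your ``resolution by convention'' should cover both degenerate families. The cleanest way to phrase it is the one the paper implicitly relies on: both exceptional cases require $l\ge k$, and since the paper immediately reduces to $l<k$ (noting $\KG(n,k,l)=\KG(n,k,k-1)$ for $l\ge k-1$), the lemma as used is correct and your argument, restricted to graphs with at least two vertices, establishes it completely. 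With that small amendment your proof is complete; the rest needs no change.
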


The Nim-value of the empty graph is $0$ and, when a graph has no edges, its Nim-value is either $0$ or $1$, and coincides with the number of vertices modulo $2$. So, when $\KG(n,k,l)$ has no edges, we trivially get that  ${\rm Nim}(\KG(n,k,l)) = 1$ if and only if $\KG(n,k,l)$ is not the empty graph and $\binom{n}{k}$ is odd; and ${\rm Nim}(\KG(n,k,l)) = 0$ otherwise. Another easy remark is that whenever $l \geq k-1$, then $\KG(n,k,l) = \KG(n,k,k-1)$ and this coincides with $\mK_{\binom{n}{k}}$, the complete graph with $\binom{n}{k}$ vertices. So, there is no loss of generality in assuming that $l < k$.

All these easy considerations yield that it suffices to study the Nim-value of the graphs $\KG(n,k,l)$ when $(n,k,l) \in \N^3$ and $l < k$. The following result, which is the main result of this section, provides the Nim-value in all these
cases.

\begin{theorem}\label{th:genKneser}Let $(n,k,l) \in \N^3$ with $l < k$  and set $m := \lceil \log_2(k-l) \rceil$. Then,

	\[ {\rm Nim}(\KG(n,k,l)) =  \left( \binom{n\ {\rm mod}\ 2^m}{k\ {\rm mod}\ 2^m} \ {\rm mod}\ 2 \right) \cdot \left( {\lfloor n/2^m \rfloor  \choose \lfloor k /2^m \rfloor}\ {\rm mod} \ 3 \right). \]

\end{theorem}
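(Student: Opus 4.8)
The plan is to prove Theorem~\ref{th:genKneser} by analyzing the structure of the Kneser poset under chomp moves and reducing to smaller instances via a recursion that respects the base-$2^m$ digit decomposition of $(n,k)$. The key observation driving everything is that $m = \lceil \log_2(k-l)\rceil$ is chosen so that $2^m$ is the smallest power of $2$ that is at least $k-l$; equivalently, $2^{m-1} < k-l \leq 2^m$. This is precisely the threshold governing when two $k$-subsets of $[1,n]$ must intersect in more than $l$ elements: if we partition $[1,n]$ into blocks, sets confined to a union of $\lfloor n/2^m\rfloor$ blocks of size $2^m$ behave rigidly. I would first set up the combinatorics of which vertices (= $k$-sets) and which edges survive after a move, noting that removing the up-set of a vertex $v$ deletes $v$ and all its neighbors (the $k$-sets meeting $v$ in $\leq l$ points), while removing an edge deletes just that edge.

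First I would handle the degenerate regime directly from Lemma~\ref{trivial} and the remarks following it: if $\KG(n,k,l)$ has no edges the formula must be checked to reduce to $\binom{n}{k}\bmod 2$, using Lucas' theorem to see that $\binom{n\bmod 2^m}{k\bmod 2^m}\cdot\binom{\lfloor n/2^m\rfloor}{\lfloor k/2^m\rfloor}$ has the same parity as $\binom{n}{k}$ and that the second factor mod $3$ does not disturb parity in that edge-free corner. Then, for the main case, I would prove by induction on $n$ (or on $\binom{n}{k}$) a recursive identity for ${\rm Nim}(\KG(n,k,l))$ in terms of Nim-values of strictly smaller generalized Kneser graphs. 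The heart of the argument: identify a first player move (or a family of moves) that reduces $\KG(n,k,l)$ to a disjoint union over blocks — here Theorem~\ref{th:nimsum} (Sprague--Grundy) converts the geometric block decomposition into an XOR of Nim-values, and one shows the XOR telescopes into the claimed product formula. The factor "$\bmod\ 2$" should emerge from the parity/XOR bookkeeping over the $2^m$-sized residue block, and the factor "$\bmod\ 3$" should emerge because across the $\lfloor n/2^m\rfloor$ large blocks the game behaves like a Nim-type heap whose Grundy values cycle with period related to $3$ (the mex of $\{0\}$, $\{0,1\}$, $\{0,1,2\}$ patterns, or a ternary Lucas-type recursion on $\binom{\lfloor n/2^m\rfloor}{\lfloor k/2^m\rfloor}$).

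The main obstacle I expect is establishing the recursion cleanly: showing both that the first player \emph{can} reach a position whose Nim-value realizes the claimed value, and that \emph{no} move reaches a position with a smaller-or-equal ``mex-blocking'' value — i.e., verifying the mex computation ${\rm Nim}(\KG(n,k,l)) = {\rm mex}\{{\rm Nim}(\KG_v), {\rm Nim}(\KG_e)\}$ really yields the product. Edge-removal moves are the delicate part, since after deleting a single edge the remaining object is no longer a generalized Kneser graph, so one needs an auxiliary lemma (probably proved in tandem by a stronger induction hypothesis) describing Nim-values of ``generalized Kneser graphs minus a few edges'', or an argument that such moves never beat the optimal vertex move. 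I would also need Lucas' theorem for the prime $2$ (for the first factor) and an analogue controlling $\binom{\cdot}{\cdot}\bmod 3$ to match the digit decomposition, so a secondary technical step is proving that $\lfloor n/2^m\rfloor$ and $\lfloor k/2^m\rfloor$ are the ``right'' quotients, i.e. that the block structure at scale $2^m$ is forced by the condition $2^{m-1}<k-l\le 2^m$. Once the recursion and these number-theoretic inputs are in place, the closed form follows by unwinding the induction.
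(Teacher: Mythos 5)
Your plan has two genuine gaps, and they sit exactly where the real work lies. First, the decomposition you propose does not exist in the form you need: you want a family of moves that splits $\KG(n,k,l)$ into a \emph{disjoint union} over blocks so that Theorem~\ref{th:nimsum} (Sprague--Grundy XOR) applies, but at scale $2^m$ the natural structure is the opposite of a disjoint union. Any $k$-set containing the block $I_m=[1,2^m]$ meets any $k$-set avoiding $I_m$ in at most $k-2^m\leq l$ elements, so these two classes are \emph{completely joined}, not separated; no sequence of chomp moves disconnects the graph along the blocks, and the XOR formula never enters. The paper's route is built on this join structure: an involution fixed-point lemma (Lemma~\ref{puntosfijos}, via Lemma~\ref{kneserlema}) shows ${\rm Nim}(\KG(n,k,l))$ equals the Nim-value of the join $\KG(n-2^m,k,l)+\KG(n-2^m,k-2^m,l-2^m)$ (Proposition~\ref{kneserdescomposicion}), iterating with Corollary~\ref{joingraph} until every factor is edgeless; the result is a complete multipartite graph, and the factor ``${\rm mod}\ 3$'' comes from the known theorem of Khandhawit--Ye for complete multipartite graphs (Theorem~\ref{compmultipartito}), not from any heap-like Grundy cycling, which you assert but do not justify.

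Second, you correctly identify the mex computation over all moves --- in particular edge deletions, which leave positions that are no longer generalized Kneser graphs --- as the main obstacle, but you offer no way past it beyond the hope of ``an auxiliary lemma'' or ``an argument that such moves never beat the optimal vertex move.'' This is not a minor technicality to be filled in later: a direct induction on the mex recursion would force you to control Nim-values of an explosion of perturbed positions, and no such control is known. The point of the symmetry method is that it bypasses the mex computation entirely: once an edge-free involution $\varphi$ is found, player strategy-copying ($u\mapsto\varphi(u)$ off the fixed-point subgraph) reduces the whole game to the fixed-point subgraph without ever analyzing individual moves. Also note that Lucas/Kummer, which you lean on for the base case and the digit bookkeeping, are not needed anywhere in the actual proof; the exponents $\lfloor n/2^m\rfloor$, $n\ {\rm mod}\ 2^m$ arise mechanically from iterating the $2^m$-block reduction, and the only number-theoretic input is the parity of the part sizes fed into Theorem~\ref{compmultipartito}. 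As written, your proposal would not produce a proof without importing the involution reduction (or an equivalent new idea) that it currently lacks.
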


To prove this result we are going to demonstrate that the Nim-value of a generalized Kneser graph coincides with the one of a complete multipartite graph, and then we will conclude by applying the following result from \cite{KY}.

\begin{theorem}\label{compmultipartito}\cite[Theorem 2]{KY} Let $\mK_{n_1,\ldots,n_r}$ denote the complete $r$-partite graph with partitions of sizes $n_1,\ldots,n_r \in \Z^+$. Then,
\begin{center} ${\rm Nim}(\mK_{n_1,\ldots,n_r}) = (t\ {\rm mod} \ 3)$; where $t := |\{i \, \vert \, n_i$ is odd$\}|$. \end{center}
\end{theorem}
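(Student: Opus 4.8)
The plan is to reduce an arbitrary complete multipartite graph to a complete graph by deleting vertices in twin pairs, and then to compute the Nim-value of complete graphs directly. Recall that a move consists either of deleting a vertex together with all its incident edges, or of deleting a single edge. The reduction rests on the following \emph{twin lemma}: if a graph $G$ has two non-adjacent vertices $u,u'$ with the same neighbourhood, then $\mathrm{Nim}(G)=\mathrm{Nim}(G-u-u')$. Any two vertices lying in the same part of $\mK_{n_1,\dots,n_r}$ form such a pair (they are non-adjacent and both joined to every vertex outside their part), so applying the lemma repeatedly inside each part reduces $n_i$ to $n_i \bmod 2$. Even parts disappear and odd parts shrink to size $1$, leaving the complete graph $\mK_t=\mK_{1,\dots,1}$ on $t$ vertices, where $t=|\{i : n_i \text{ odd}\}|$. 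Hence $\mathrm{Nim}(\mK_{n_1,\dots,n_r})=\mathrm{Nim}(\mK_t)$, and it remains to show $\mathrm{Nim}(\mK_t)=t \bmod 3$.

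I would prove the twin lemma by strong induction on $|V(G)|+|E(G)|$. By Theorem~\ref{th:nimsum} the claim $\mathrm{Nim}(G)=\mathrm{Nim}(G-u-u')$ is equivalent to $\mathrm{Nim}\big(G \cup_0 G_0\big)=0$ with $G_0:=G-u-u'$, i.e.\ the disjoint sum $H:=G\cup_0 G_0$ is a losing position. I would exhibit a strategy for the second player in $H$ that pairs up the first player's moves: a move on the copy of $G_0$ sitting inside $G$ is answered by the identical move on the standalone $G_0$ (and vice versa); deleting $u$ is answered by deleting $u'$ (and vice versa); and deleting an edge $uw$ is answered by deleting $u'w$ (and vice versa). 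One checks that each response is legal and leaves a position $G'\cup_0 G_0'$ in which $u,u'$ are still non-adjacent twins of $G'$ and $G'-u-u'=G_0'$, but with $|V(G')|+|E(G')|$ strictly smaller; by the induction hypothesis such a position has Nim-value $0$ (the response to deleting $u$ or $u'$ leaves two identical copies $G_0\cup_0 G_0$, a losing position by trivial mirroring). The point that makes this delicate — and which I expect to be the main obstacle — is that deleting a vertex $w$ in the common neighbourhood of $u,u'$ simultaneously destroys both edges $uw$ and $u'w$; there is therefore \emph{no} static symmetry of $H$ exchanging the two summands, and one must argue dynamically, observing that such a move keeps $u,u'$ symmetric (both merely lose the neighbour $w$) so that the twin structure, and with it the induction hypothesis, survives.

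Finally I would compute $\mathrm{Nim}(\mK_t)=t \bmod 3$ by induction on $t$. Up to the symmetry of $\mK_t$ the only moves are: delete a vertex, yielding $\mK_{t-1}$; or delete an edge, yielding $\mK_t$ minus an edge, which is the complete multipartite graph $\mK_{2,1,\dots,1}$ with one part of size $2$ and $t-2$ parts of size $1$, and which by the twin lemma has the same Nim-value as $\mK_{t-2}$. Using the inductive hypothesis this gives
\[
\mathrm{Nim}(\mK_t)=\mathrm{mex}\{\,(t-1)\bmod 3,\ (t-2)\bmod 3\,\}.
\]
Since $t$, $t-1$, $t-2$ are three consecutive integers, the two residues $(t-1)\bmod 3$ and $(t-2)\bmod 3$ are exactly the two elements of $\{0,1,2\}\setminus\{t\bmod 3\}$, so their mex is $t\bmod 3$; this closes the induction, the base cases $t=0,1$ being immediate and $t\ge 2$ falling under the displayed formula. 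Combining this value with the reduction above yields $\mathrm{Nim}(\mK_{n_1,\dots,n_r})=t \bmod 3$, as claimed.
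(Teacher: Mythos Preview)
The paper does not prove this statement itself; Theorem~\ref{compmultipartito} is quoted from \cite{KY} and used as a black box. Your argument is correct and is in fact the one the paper's own machinery yields: your twin lemma is precisely the special case of Lemma~\ref{puntosfijos} in which the involution swaps two non-adjacent twins and fixes every other vertex, and the reduction $\mK_{n_1,\dots,n_r}\to\mK_t$ is exactly how the paper derives the clique-complex generalisation Proposition~\ref{compmultipartitoclique} from Lemma~\ref{puntosfijosclique}. Your computation of $\mathrm{Nim}(\mK_t)=t\bmod 3$ via the two-option recursion is the standard one (cf.\ Lemma~\ref{completas}). One minor remark: the ``delicate'' dynamical induction you set up for the twin lemma can be sidestepped by the static pairing argument the paper records in the unnamed lemma immediately following Lemma~\ref{puntosfijosclique}, which handles in one stroke any poset involution whose fixed set is a down-set; in the twin situation the fixed set is $G-u-u'$, and this is a down-set of the face poset of $G$ precisely because $u$ and $u'$ are non-adjacent.
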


The tool we use is to exploit the many symmetries of generalized Kneser graphs. In particular, we are going to use the following lemma.

\begin{lemma}\label{puntosfijos} \cite[Lemma 2.1]{O} Let $G = (V(G),E(G))$ be a graph and let $\varphi: V(G) \rightarrow V(G)$ be an automorphism of $G$ such that
\begin{itemize}
\item[(a)] $\varphi \circ \varphi = {\rm id}_V$ (i.e., $\varphi$ is an involution), and
\item[(b)]  $\{u, \varphi(u)\} \notin E$ for all $u \in V$.
\end{itemize}
Then, ${\rm Nim}(G) = {\rm Nim}(H)$, where $H = (V(H),E(H))$ is the induced subgraph of $G$ whose vertices are the fixed points of $\varphi$, i.e., $V(H) =\{u \in V(G) \, \vert \, \varphi(u) = u\}$.
\end{lemma}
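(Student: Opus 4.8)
The plan is to reduce the statement to a pairing (``mirroring'') strategy via the Sprague--Grundy criterion. Recall from Theorem \ref{th:nimsum} together with the equivalence ``$\ch(P)=B$ iff ${\rm Nim}(P)=0$'' that, for two graphs $G$ and $H'$, one has ${\rm Nim}(G)={\rm Nim}(H')$ if and only if ${\rm Nim}(G\cup_0 H')=0$, i.e.\ if and only if the second player wins chomp on the disjoint union $G\cup_0 H'$. Taking $H'$ to be a disjoint copy of the fixed-point subgraph $H$, it therefore suffices to exhibit an explicit winning strategy for the second player on $G\cup_0 H'$.

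First I would lift $\varphi$ to the whole face poset $P$ of $G$ (whose elements are the empty face, the vertices, and the edges, ordered by inclusion). Since $\varphi$ is a graph automorphism it induces an order-involution $\Phi$ of $P$ fixing the minimum, sending a vertex $v$ to $\varphi(v)$ and an edge $\{u,v\}$ to $\{\varphi(u),\varphi(v)\}$. The key structural observation, which is exactly where hypothesis (b) enters, is the description of the fixed elements of $\Phi$: a vertex is fixed iff it lies in $H$, while an edge $\{u,v\}$ is fixed iff $\{\varphi(u),\varphi(v)\}=\{u,v\}$, which forces either both endpoints to be fixed (an edge of $H$) or $\varphi(u)=v$; the latter is ruled out by (b), since $\{u,\varphi(u)\}\notin E$. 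Hence the fixed elements of $\Phi$ are precisely the faces of $H$. Condition (b) is used a second, equally crucial time: for a non-fixed vertex $v$ the up-sets of $v$ and $\varphi(v)$ in $P$ are disjoint, because a common element would be an edge incident to both $v$ and $\varphi(v)$, i.e.\ the forbidden edge $\{v,\varphi(v)\}$.

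With this in hand I would describe the second player's strategy on $G\cup_0 H'$ so as to maintain, before each first-player move, the invariant: (i) the $G$-component is $\Phi$-symmetric, and (ii) the subgraph of $G$ induced on the currently present fixed vertices and fixed edges coincides with the current position of $H'$. This holds initially. If the first player removes a non-fixed vertex or edge $x$ of $G$, the second player replies with its mirror $\Phi(x)$, which is still present and whose up-set is disjoint from that of $x$ (using the disjointness above, and that distinct vertices, resp.\ distinct edges, are incomparable in $P$); this deletes a $\Phi$-invariant set, restoring (i), and touches no fixed face, preserving (ii). If the first player removes a fixed face of $G$, its up-set is already $\Phi$-invariant so (i) is automatic, and the second player copies the same move in $H'$ to restore (ii); symmetrically, a first-player move in $H'$ is answered by the corresponding move in the fixed part of $G$. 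The point to check in these last cases is that deleting a fixed vertex $u$ of $G$ removes the non-fixed edges at $u$ in $\Phi$-orbit pairs, keeping the $G$-component symmetric, while altering the fixed part of $G$ exactly as deleting $u$ alters $H'$.

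Finally, since the invariant guarantees a legal reply to every first-player move and the game is finite and strictly decreasing, the first player is always the one confronted with the empty position; hence the second player wins $G\cup_0 H'$, and as $H'\cong H$ this yields ${\rm Nim}(G)={\rm Nim}(H)$. The main obstacle is not any single step but the careful bookkeeping of the invariant across the fixed/non-fixed interface: one must track that edges joining a fixed vertex to a non-fixed vertex occur in mirror pairs and are handled consistently, and it is precisely hypothesis (b) that keeps the up-sets of $v$ and $\varphi(v)$ from interfering, which is what makes the mirroring legal at every move.
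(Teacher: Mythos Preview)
Your argument is correct and follows essentially the same route as the paper. The paper does not prove Lemma~\ref{puntosfijos} directly (it is cited from~\cite{O}), but it does prove the more general poset version from which both Lemma~\ref{puntosfijos} and Lemma~\ref{puntosfijosclique} follow: reduce via Theorem~\ref{th:nimsum} to showing $\ch(P\cup_0 F')=B$, then win as $B$ by copying moves between $F$ and $F'$ and mirroring non-fixed moves through the involution. Your proof is the graph-level instantiation of this, with the extra (correct) verification that condition~(b) forces the fixed faces of $\Phi$ to be exactly the faces of $H$ --- equivalently, that $F$ is a down-set, which is precisely the hypothesis the paper's abstract poset lemma assumes outright.
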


Let us illustrate this result with an example. Consider the graph $G$ of Figure \ref{fig:involution} and set $\varphi$ the involution of $G$ sending $u_i \mapsto v_i$, $v_i \mapsto u_i$ and $w \mapsto w$ for $w \notin \{u_1,u_2,v_1,v_2\}$; then, the subgraph induced by the fixed points of $\varphi$ is $\mK_5$ and, by Lemma \ref{puntosfijos} and Theorem \ref{compmultipartito},  one gets that ${\rm Nim}(G) = {\rm Nim}(\mK_5) = 2$.

\begin{figure}
\includegraphics[scale=.6]{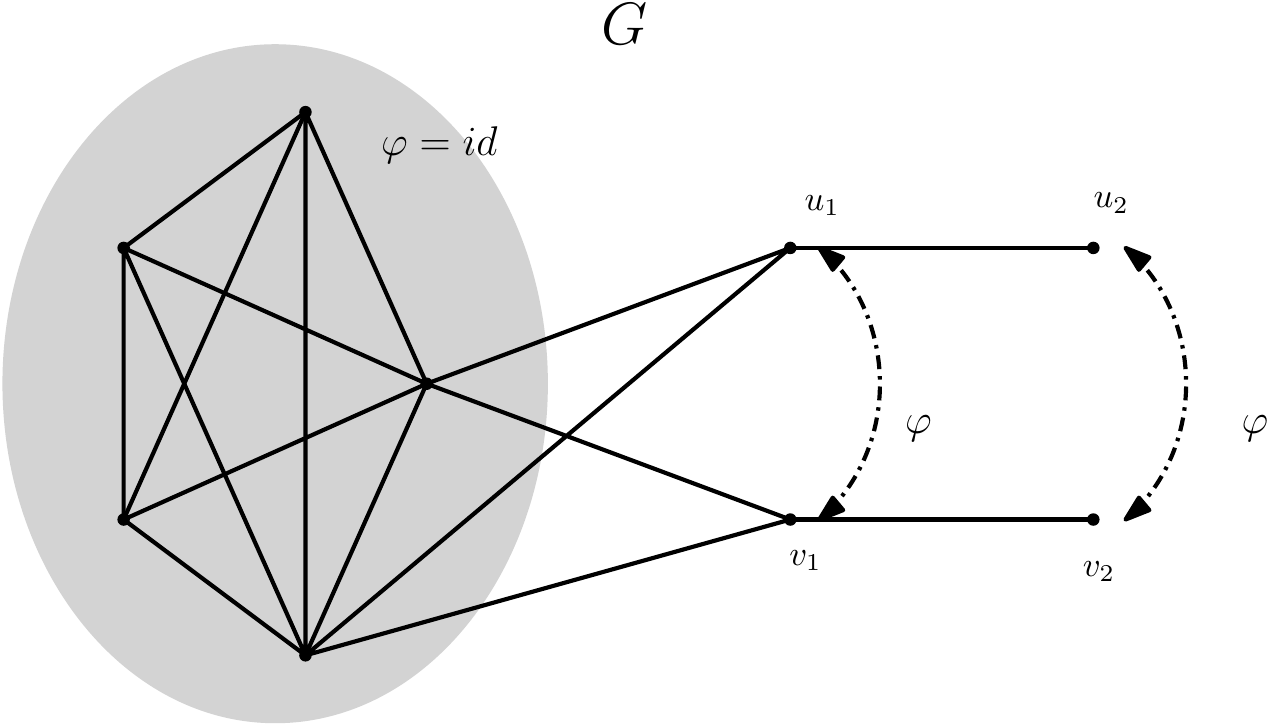}
\caption{${\rm Nim}(G) = {\rm Nim}(\mK_5) = 2$.}
\end{figure}\label{fig:involution}

An interesting feature of Lemma \ref{puntosfijos} is that it does not only reduce the computation of the Nim-value of $G$ to the one of the smaller graph $H$, but also exhibits a winning strategy provided one knows a winning strategy for $H$. The idea is the following, if ${\rm Nim}(H) = 0$, then player $B$ wins on chomp on $H = (V(H), E(H))$, thus whenever $A$ plays:
\begin{itemize}
\item $u \notin V(H)$, then $B$ answers $\varphi(u) \notin V(H)$,
\item $\{u,v\} \notin E(H)$,  then $B$ answers $\{\varphi(u),\varphi(v)\} \notin E(H)$, and
\item a vertex or an edge in $H$, then $B$ plays the corresponding winning answer in $H$.
\end{itemize}
Analogously, if ${\rm Nim}(H) > 0$ and player $A$ wins on chomp on $H = (V(H), E(H))$, then $A$ plays the winning move in $H$ and continues playing as described before.

As a direct consequence of Lemma \ref{puntosfijos}, we have the following result that applies for join graphs. Given to graphs $G_1 = (V_1,E_1)$ and $G_2 = (V_2,E_2)$,  the {\it join graph} of $G_1$ and $G_2$, which we denote by $G_1 + G_2$ is the graph with vertices $V_1 \cup V_2$ and
edges $E_1 \cup E_2 \cup (V_1 \times V_2)$.

\begin{corollary}\label{joingraph}Let $G = G_1 + G_2$ be the join graph of $G_1 = (V_1,E_1)$ and $G_2 = (V_2,E_2)$ and, for $i = 1,2$, let $\varphi_i: V_i \rightarrow V_i$ be an
automorphism of $G_i$ such that
\begin{itemize}
\item[(a)] $\varphi_i \circ \varphi_i = {\rm id}_{V_i}$, and
\item[(b)] $\{u, \varphi_i(u)\} \notin E_i$ for all $u \in V_i$.
\end{itemize}
Then, ${\rm Nim}(G) = {\rm Nim}(G')$, where $G' = H_1+ H_2$, and $H_i$ is the induced subgraph of $G_i$ with vertices are the fixed points of $\varphi_i$, i.e., $V(H_i) = \{u \in V_i \, \vert \, \varphi_i(u) = u\}$.
\end{corollary}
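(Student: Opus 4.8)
The plan is to apply Lemma~\ref{puntosfijos} to the graph $G=G_1+G_2$ with the involution obtained by pasting $\varphi_1$ and $\varphi_2$ together. Concretely, define $\varphi\colon V_1\cup V_2\to V_1\cup V_2$ by $\varphi(u)=\varphi_i(u)$ whenever $u\in V_i$; since $V_1$ and $V_2$ are disjoint this is well defined, it is a bijection, and $\varphi\circ\varphi=\mathrm{id}$ because each $\varphi_i$ is an involution. The first thing to check is that $\varphi$ is an automorphism of $G$. An edge of $G$ lies in $E_1$, in $E_2$, or joins a vertex of $V_1$ to a vertex of $V_2$. Edges of $E_1$ are preserved because $\varphi_1$ is an automorphism of $G_1$ and $\varphi$ maps $V_1$ into $V_1$; similarly for $E_2$; and $\varphi$ maps the set of cross pairs $\{u,v\}$ with $u\in V_1$, $v\in V_2$ bijectively onto itself, so all of those edges are preserved as well. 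Hence $\varphi$ satisfies hypothesis~(a) of Lemma~\ref{puntosfijos}.

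Next I would verify hypothesis~(b): $\{u,\varphi(u)\}\notin E(G)$ for every $u$. Say $u\in V_1$ (the case $u\in V_2$ is symmetric). Then $\varphi(u)=\varphi_1(u)\in V_1$, so both endpoints of $\{u,\varphi_1(u)\}$ lie in $V_1$; therefore this pair cannot be one of the cross edges of $V_1\times V_2$ nor an edge of $E_2$, and it is an edge of $G$ if and only if it belongs to $E_1$. But $\{u,\varphi_1(u)\}\notin E_1$ by hypothesis~(b) on $\varphi_1$ (and if $u$ is a fixed point the ``pair'' is a single vertex, hence not an edge at all). So (b) holds for $\varphi$.

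Lemma~\ref{puntosfijos} now gives $\mathrm{Nim}(G)=\mathrm{Nim}(H)$, where $H$ is the subgraph of $G$ induced on the fixed-point set $\{u\mid\varphi(u)=u\}$. Since a vertex $u\in V_i$ is fixed by $\varphi$ exactly when it is fixed by $\varphi_i$, this fixed-point set is $V(H_1)\cup V(H_2)$. It remains to recognize the induced subgraph: inside $V(H_1)$ the edges of $G$ are exactly the edges of $E_1$ with both endpoints in $V(H_1)$, i.e.\ $E(H_1)$; likewise inside $V(H_2)$ we recover $E(H_2)$; and every pair with one endpoint in $V(H_1)\subseteq V_1$ and the other in $V(H_2)\subseteq V_2$ is a cross edge of $G$, hence present. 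Thus $H=H_1+H_2=G'$, and $\mathrm{Nim}(G)=\mathrm{Nim}(G')$ as claimed. The degenerate cases where some $V_i$ or some $V(H_i)$ is empty cause no trouble, since a join with the empty graph is the other graph and Lemma~\ref{puntosfijos} applies verbatim.

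There is really no serious obstacle here; the only point demanding a little care is hypothesis~(b) of Lemma~\ref{puntosfijos} for $\varphi$, where one must use that a pair of vertices both lying in $V_1$ can only be an edge of $G$ by virtue of being an edge of $G_1$ --- the join adds no new edges between such vertices --- so that condition~(b) for $\varphi_1$ genuinely transfers to $\varphi$ on $G$. Finally, the strategy-copying recipe described after Lemma~\ref{puntosfijos} carries over as well: a known winning strategy for $G'=H_1+H_2$ yields, via $\varphi$, an explicit winning strategy for $G$.
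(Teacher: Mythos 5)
Your proposal is correct and follows exactly the paper's route: the paper also defines $\varphi$ as the unique map extending $\varphi_1$ and $\varphi_2$, notes it satisfies the hypotheses of Lemma~\ref{puntosfijos}, and concludes; you have merely written out the verification (automorphism of the join, condition~(b) via the fact that $u$ and $\varphi_1(u)$ lie in the same part, and identification of the fixed-point subgraph with $H_1+H_2$) that the paper calls straightforward.
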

\begin{proof}It suffices to consider $\varphi: V_1 \cup V_2 \rightarrow V_2 \cup V_2$ the only automorphism extending $\varphi_1$ and $\varphi_2$.
It is straightforward to check that $\varphi$ satisfies the hypotheses of Lemma \ref{puntosfijos} and, as a consequence, the result follows.
\end{proof}

In this section we will consider several times a particular type of involution of induced subgraphs of $\KG(n,k,l)$ which is given by a permutation $\pi \in \Sigma_n$ of order $2$; this  type of involutions is described in the following technical lemma:

\begin{lemma}\label{permutacion}Let $k \in [0,n]$ and let also
\begin{itemize} \item $H = (V(H),E(H))$ be an induced subgraph of $\KG(n,k,l)$, and
\item $\pi \in \Sigma_n$ be a permutation of order $2$, i.e.,  $\pi = (a_1,a_2) \cdots (a_{2t-1},a_{2t})$ for some different values $a_1,\ldots,a_{2t} \in [1,n].$ \end{itemize} Consider $\varphi_{\pi} \in {\rm Aut}(\KG(n,k,l))$ defined as $\varphi_{\pi}(\{b_1,\ldots,b_k\}) := \{\pi(b_1),\ldots,\pi(b_k)\}.$ If
 \begin{itemize}\item[(a)] $\varphi_{\pi}(V(H)) = V(H)$, and
 \item[(b)] $t < k-l$; \end{itemize} then ${\rm Nim}(H) = {\rm Nim}(H')$, where $H'$ is the
 induced subgraph of $H$ with vertex set $$V(H') := \{u \in V(H) \, \vert \, a_{2i+1} \in u \Leftrightarrow a_{2i+2} \in u \text{ \ for\  all\ }i \in [0,t-1] \}.$$
\end{lemma}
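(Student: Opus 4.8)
The plan is to apply Lemma~\ref{puntosfijos} directly to the graph $H$ with the involution $\varphi := \varphi_\pi$ restricted to $V(H)$. Hypothesis (a) of our lemma says that $\varphi_\pi$ maps $V(H)$ onto itself, and since $H$ is an \emph{induced} subgraph of $\KG(n,k,l)$ while $\varphi_\pi$ is an automorphism of $\KG(n,k,l)$, the restriction of $\varphi_\pi$ to $V(H)$ is an automorphism of $H$. It is an involution because $\varphi_\pi \circ \varphi_\pi = \varphi_{\pi^2} = \varphi_{{\rm id}} = {\rm id}$, so condition (a) of Lemma~\ref{puntosfijos} holds. Thus the two things left to do are: verify condition (b) of Lemma~\ref{puntosfijos}, namely that $\{u,\varphi_\pi(u)\}\notin E(H)$ for every $u\in V(H)$; and identify the subgraph of $H$ induced on the fixed points of $\varphi_\pi$ with $H'$.

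For condition (b) I would unwind the definition of adjacency in $\KG(n,k,l)$: two $k$-sets are adjacent exactly when they meet in at most $l$ elements, so it suffices to show $|u\cap\varphi_\pi(u)| > l$ for every $u\in V(H)$ (the case $u=\varphi_\pi(u)$ being trivial, as there are no loops). An element $x$ lies in $u\cap\varphi_\pi(u)$ iff $x\in u$ and $\pi(x)\in u$; elements fixed by $\pi$ therefore cause no loss, and a transposition $(a_{2i-1},a_{2i})$ of $\pi$ removes an element from the intersection precisely when exactly one of $a_{2i-1},a_{2i}$ lies in $u$, in which case it removes exactly one element. Since $\pi$ consists of only $t$ transpositions, this gives $|u\cap\varphi_\pi(u)| \ge k-t$, and hypothesis (b), $t<k-l$, yields $|u\cap\varphi_\pi(u)| \ge k-t > l$, as needed. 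This counting estimate is the heart of the argument and the only step requiring genuine care; the hypothesis $t<k-l$ is tailored exactly to make this inequality work.

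Finally, Lemma~\ref{puntosfijos} gives ${\rm Nim}(H) = {\rm Nim}(H'')$, where $H''$ is the subgraph of $H$ induced on $\{u\in V(H) \mid \varphi_\pi(u)=u\}$. A $k$-set $u$ satisfies $\varphi_\pi(u)=\pi(u)=u$ exactly when $u$ is a union of $\pi$-orbits, i.e. when for each $i\in[0,t-1]$ it contains both or neither of $a_{2i+1},a_{2i+2}$; this is precisely the condition defining $V(H')$. Hence $H''=H'$ and ${\rm Nim}(H)={\rm Nim}(H')$. (One could alternatively peel off the transpositions of $\pi$ one at a time, but applying $\varphi_\pi$ as a single involution is cleaner and makes the role of the bound $t<k-l$ transparent, so that is the route I would take.)
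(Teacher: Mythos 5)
Your proposal is correct and follows essentially the same route as the paper: apply Lemma~\ref{puntosfijos} to $H$ with the single involution $\varphi_\pi$, verify non-adjacency via the bound $|u\cap\varphi_\pi(u)|\geq k-t>l$, and identify the fixed points with $V(H')$. Your counting justification of that bound is just a more detailed spelling-out of the inequality the paper states directly.
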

\begin{proof}The morphism $\varphi_{\pi}:  V(H) \longrightarrow V(H)$ is well defined, it is an involution (because $\pi$ is of order $2$) and  $\varphi_{\pi}$ is an automorphism
of $H$ (because $|u \cap v| = |\varphi_{\pi}(u) \cap \varphi_{\pi}(v)|$ for all $u,v \in V(H)$ and $H$ is an induced subgraph of $\KG(n,k,l)$).  Moreover, $\{u, \varphi_{\pi}(u)\} \notin E(H)$ because $|u \cap \varphi_{\pi}(u)| \geq k - t > l.$  It suffices to observe that $$\{u \in V(H) \, \vert \, \varphi_{\pi}(u) = u \} = \{u \in V(H) \, \vert \, a_{2i+1} \in u \Leftrightarrow a_{2i+2} \in u, \ \forall i \in  [0,t-1]\}$$ and apply Lemma \ref{puntosfijos} to get the result.
\end{proof}

The following lemma will be crucial to prove our result.

\begin{lemma}\label{kneserlema} Let $m \in \Z^+$ such that $2^m < {\rm min}(2(k - l), n + 1)$ and set $I_m := [1,2^m] = \{1,\ldots,2^m\}$; then
$${\rm Nim}(\KG(n,k,l)) = {\rm Nim}(H_m),$$
where $H_m$ is the induced subgraph of $\KG(n,k,l)$ with vertex set
$$ V(H_m) :=  \{u \in V(\KG(n,k,l)) \ \vert \ I_m \subseteq u {\text \ or \ }  I_m \cap u = \emptyset\}. $$
\end{lemma}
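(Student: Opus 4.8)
The plan is to build $H_m$ from $\KG(n,k,l)$ by repeatedly applying Lemma \ref{permutacion} with transpositions, peeling off one coordinate pair at a time until the only surviving vertices are those that either contain all of $I_m = \{1,\ldots,2^m\}$ or are disjoint from $I_m$. Write $2^m = 2t$, so $t = 2^{m-1}$, and consider the permutation $\pi_j = (2j-1,\,2j)$ for $j = 1,\ldots,t$. I would process these transpositions one after another, maintaining a descending chain of induced subgraphs $\KG(n,k,l) = H^{(0)} \supseteq H^{(1)} \supseteq \cdots \supseteq H^{(t)} = H_m$, where $H^{(j)}$ is the induced subgraph on those $u$ with $2i-1 \in u \Leftrightarrow 2i \in u$ for all $i \le j$.

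The key step is to verify, at each stage $j$, that Lemma \ref{permutacion} applies to $H = H^{(j-1)}$ with $\pi = \pi_j$. Hypothesis (a) requires $\varphi_{\pi_j}(V(H^{(j-1)})) = V(H^{(j-1)})$: this holds because the defining condition of $H^{(j-1)}$ — that $u$ respects the pairs $\{1,2\},\ldots,\{2(j-1)-1,2(j-1)\}$ — is invariant under the transposition $(2j-1,2j)$, which touches none of those coordinates. Hypothesis (b) requires that the number of transposed pairs, here $t' = 1$, satisfies $1 < k - l$; since we assumed $2^m < 2(k-l)$ and $m \ge 1$, we have $k - l \ge 2 > 1$, so this is fine. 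Applying the lemma then gives ${\rm Nim}(H^{(j-1)}) = {\rm Nim}(H^{(j)})$, and chaining these $t$ equalities yields ${\rm Nim}(\KG(n,k,l)) = {\rm Nim}(H_m)$. One should also check at the outset that the condition $2^m < n+1$, i.e.\ $2^m \le n$, guarantees $I_m \subseteq [1,n]$ so that the transpositions $\pi_j$ are genuinely permutations of $[1,n]$ and the vertex set of $H_m$ is correctly described (in particular the $u$ containing $I_m$ exist only when $2^m \le k$, but the statement makes sense regardless).

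The main obstacle — really a bookkeeping subtlety rather than a deep difficulty — is to make sure that after all $t$ steps the surviving vertex set is exactly $V(H_m)$ as stated, namely $\{u : I_m \subseteq u \text{ or } I_m \cap u = \emptyset\}$, and not merely the set of $u$ respecting each pair $\{2i-1,2i\}$ individually. These two descriptions coincide because $I_m = \{1,2\} \cup \{3,4\} \cup \cdots \cup \{2t-1,2t\}$ is partitioned into exactly these $t$ pairs, so "$u$ contains each pair entirely or misses it entirely" forces $u \cap I_m$ to be a union of some of the pairs; but we need more — we need $u \cap I_m$ to be all of $I_m$ or empty. This stronger conclusion does \emph{not} follow from Lemma \ref{permutacion} applied pairwise; instead I would, at stage $j$ after reducing to $H^{(j)}$, apply Lemma \ref{permutacion} once more inside $H^{(j)}$ with a transposition that swaps a "present" pair with an "absent" pair — more precisely, iterate using permutations of the form $(2i-1,2i+1)(2i,2i+2)$ (a product of two transpositions, so $t' = 2 < k-l$, again guaranteed by $2^m \le 2(k-l) - 1$) to merge the decision across consecutive pairs, collapsing the $2^{t}$ a priori possibilities for $u \cap I_m$ down to the $2$ extreme ones. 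The verification that hypothesis (a) still holds for these merging involutions — that the relevant induced subgraph is invariant — is routine, and hypothesis (b) holds with room to spare since $2^m < 2(k-l)$ forces $k - l > 2^{m-1} \ge 2$ whenever $m \ge 2$ (the case $m=1$ being immediate as $H_1 = H_m$ already has the desired form after a single transposition).
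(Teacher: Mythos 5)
Your first phase (peeling off the pair conditions one transposition at a time) is fine, and for $m\le 2$ your plan does go through. The genuine gap is in the merging phase. After you have imposed the condition $\{1,2\}\subseteq u \Leftrightarrow \{3,4\}\subseteq u$ via $(1,3)(2,4)$, the next merging involution you propose, $(3,5)(4,6)$, does \emph{not} satisfy hypothesis (a) of Lemma \ref{permutacion}: the current vertex set is not invariant under it. Concretely, take $k=5$, $l=0$ (so $k-l=5$, $m=3$, $2^m=8$), $n=13$, and $u=\{1,2,3,4,9\}$; this $u$ respects all pairs and satisfies $\{1,2\}\subseteq u \Leftrightarrow \{3,4\}\subseteq u$, but its image under $(3,5)(4,6)$ is $\{1,2,5,6,9\}$, which violates that condition. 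So the ``routine'' invariance check fails already at the second merge, and the chain of consecutive-pair merges collapses. The obstruction is structural, not a bookkeeping slip: once you have created blocks that must be all-in or all-out, the only involutions that preserve the current vertex set and impose a new relation between two such blocks are those swapping the blocks wholesale, and merging two blocks of size $2^{i-1}$ therefore costs $2^{i-1}$ transpositions. Hence no scheme using only involutions with at most two transpositions can reach $H_m$ once $m\ge 3$.

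This is exactly why the paper's proof uses the ``doubling'' permutations $\pi_1,\dots,\pi_m$, where $\pi_i$ simultaneously swaps $j \leftrightarrow j+2^{i-1}$ for the appropriate $j\in[1,2^m]$: each $\pi_i$ is a product of $2^{m-1}$ transpositions, it maps the current block structure to itself (so hypothesis (a) holds at every stage), and its fixed points merge the $2^{i-1}$-blocks into $2^i$-blocks, ending after $m$ steps with exactly $V(H_m)$. Note that this is also where the hypothesis $2^m<2(k-l)$ is used in full strength, namely $2^{m-1}<k-l$ makes the largest swaps legal in Lemma \ref{permutacion}; the fact that your argument only ever seemed to need $k-l>2$ is a sign that something was being lost. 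To repair your proof you should replace the consecutive-pair merges by a binary (block-doubling) merging scheme, accepting involutions with up to $2^{m-1}$ transpositions, which essentially reproduces the paper's argument.
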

\begin{proof}We set $M := 2^m$ and consider iterative applications of Lemma \ref{permutacion} with the following permutations $\pi_1,\ldots, \pi_{m}$:
\[ \begin{array}{llll}
\pi_1 & := & (1 ,\,2) (3 ,\,4) (5 ,\,6) \cdots  (M - 1 ,\,M), \\
\pi_2 & := & (1 ,\,3) (2 ,\,4) (5 ,\,7) \cdots  (M - 2 ,\,M), \\
& & \ldots \\
\pi_i & := & \prod_{j \in \{1,\ldots,M\} \atop{0 < (j\ {\rm mod}\ 2^{i}) \leq 2^{i-1}}} (j ,\, j + 2^{i-1}), \\
& & \ldots \\
\pi_{m} & := & (1 ,\, \frac{M}{2} + 1) (2 ,\, \frac{M}{2} + 2) \cdots  (\frac{M}{2},\, M). \\
\end{array}
 \]
We observe that each of the $\pi_i$'s is a product of $M/2 = 2^{m-1} < k-l$ transpositions and, thus, we may apply Lemma \ref{permutacion}.
As a consequence of
iteratively applying Lemma \ref{permutacion} with $\pi_1,\ldots,\pi_{m-1}$ we get that the fixed vertices after the application of $\pi_i$ for all $i$
are those of $H_m$; hence ${\rm Nim}(\KG(n,k,l)) ={\rm Nim}(H_m)$ and the result follows.
\end{proof}

%
%
%

As a consequence of the previous Lemma we get the following result. This result relates the Nim-value of a generalized Kneser graph with the one of the join graph of two smaller generalized Kneser graphs.

\begin{proposition}\label{kneserdescomposicion}Let $(n,k,l) \in \N^3$ such that $2 \leq k-l$ and take $m \in \Z^+$ such that $k-l \leq 2^m < 2(k-l)$. If $2^m \leq n$, then:
$${\rm Nim}(\KG(n,k,l)) = {\rm Nim}(\KG(n-2^m, k, l) + \KG(n-2^m, k-2^m, l-2^m)).$$
\end{proposition}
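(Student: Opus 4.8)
The plan is to apply Lemma~\ref{kneserlema} to peel off the coordinate block $I_m = [1,2^m]$ and then identify the resulting graph $H_m$ with the join claimed in the statement. Since $k - l \leq 2^m < 2(k-l)$ we have $2^m < 2(k-l)$, and the hypothesis $2^m \leq n$ gives $2^m < n+1$, so the condition $2^m < \min(2(k-l), n+1)$ of Lemma~\ref{kneserlema} is met. Hence ${\rm Nim}(\KG(n,k,l)) = {\rm Nim}(H_m)$, where $V(H_m)$ consists of those $k$-subsets $u$ of $[1,n]$ with either $I_m \subseteq u$ or $I_m \cap u = \emptyset$.

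Next I would describe $H_m$ explicitly. A vertex $u \in V(H_m)$ is determined by the pair $(u \cap I_m,\, u \setminus I_m)$, where $u \cap I_m \in \{\emptyset, I_m\}$ and $u \setminus I_m$ is a subset of $[2^m+1, n]$ (a set of size $n - 2^m$). So the vertices split into two classes: those with $u \cap I_m = \emptyset$, which correspond to $k$-subsets of a ground set of size $n-2^m$, i.e.\ to vertices of $\KG(n-2^m,k,l)$; and those with $I_m \subseteq u$, which correspond to $(k-2^m)$-subsets of a ground set of size $n-2^m$, i.e.\ to vertices of $\KG(n-2^m, k-2^m, l-2^m)$. (Here one should note these two Kneser graphs live on disjoint ground sets in the natural bijection — identify both ground sets with $[2^m+1,n]$.)

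The key step is checking the adjacency relations match those of the join. Take $u, v \in V(H_m)$ and write $|u \cap v| = |u \cap v \cap I_m| + |(u \setminus I_m) \cap (v \setminus I_m)|$. If $u, v$ are both in the first class ($u \cap I_m = v \cap I_m = \emptyset$), then $|u\cap v| = |(u\setminus I_m) \cap (v\setminus I_m)|$, so $u \sim v$ in $H_m$ iff the corresponding $k$-subsets intersect in $\leq l$ elements, i.e.\ iff they are adjacent in $\KG(n-2^m,k,l)$. If both are in the second class ($I_m \subseteq u$, $I_m \subseteq v$), then $|u \cap v| = 2^m + |(u\setminus I_m)\cap(v\setminus I_m)|$, so $u \sim v$ in $H_m$ iff $|(u\setminus I_m)\cap(v\setminus I_m)| \leq l - 2^m$, i.e.\ iff the corresponding $(k-2^m)$-subsets are adjacent in $\KG(n-2^m,k-2^m,l-2^m)$. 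Finally, if $u$ is in the first class and $v$ in the second, then $u \cap v \cap I_m = \emptyset$, so $|u \cap v| = |(u\setminus I_m) \cap (v\setminus I_m)| \leq |v \setminus I_m| = k - 2^m \leq l$, where the last inequality is exactly $k - l \leq 2^m$. Hence every such pair is adjacent, which is precisely the set of join edges $V_1 \times V_2$. This shows $H_m = \KG(n-2^m,k,l) + \KG(n-2^m,k-2^m,l-2^m)$, and combined with Lemma~\ref{kneserlema} the proposition follows.

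The one place that needs care — and the main (minor) obstacle — is the cross-class case: one must verify $k - 2^m \leq l$, equivalently $k - l \leq 2^m$, which is supplied by the hypothesis on $m$, so all cross pairs really are edges and nothing more is needed. It is also worth a sentence to handle the degenerate subcases: if $k < 2^m$ the second class is empty and $\KG(n-2^m,k-2^m,l-2^m)$ is the empty graph (consistent with part~(a) of Lemma~\ref{trivial} since $k - 2^m < 0$), and if $k - 2^m > n - 2^m$ — impossible here since $k \leq n$ — it would also be empty; in all cases the join-with-empty-graph convention makes the identity hold verbatim.
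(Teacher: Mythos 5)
Your proposal is correct and follows essentially the same route as the paper: apply Lemma~\ref{kneserlema} to reduce to $H_m$, identify the two vertex classes with $\KG(n-2^m,k,l)$ and $\KG(n-2^m,k-2^m,l-2^m)$, and verify all cross edges via $|u \cap v| \leq k-2^m \leq l$. Your extra care with the within-class adjacencies and the degenerate (empty-graph) subcases only makes explicit what the paper leaves as routine isomorphism checks.
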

\begin{proof}By Lemma \ref{kneserlema}, we have that ${\rm Nim}(\KG(n,k,l)) = {\rm Nim}(H_m),$ with $H_m$ is the induced subgraph of $\KG(n,k,l)$ with vertex set $$V(H_m) :=  \{u \in V(\KG(n,k,l)) \ \vert \ I_m \subseteq u {\text \ or \ }  I_m \cap u = \emptyset\};$$ where $I_m = [1,\ldots,2^m]$. Taking $G_1$ the induced subgraph with vertex set $$V(G_1) :=  \{u \in V(\KG(n,k,l)) \ \vert \ I_m \subseteq u\}$$ it turns out that $G_1 \simeq \KG(n-2^m,k-2^m,l-2^m)$, and taking $G_2$ the induced subgraph with vertex set $$V(G_2) :=  \{u \in V(\KG(n,k,l)) \ \vert \ I_m \cap u = \emptyset\}$$ it turns out that $G_2 \simeq \KG(n-2^m,k,l)$. Finally, we observe that for all $u \in V(G_1),\, v \in V(G_2)$ we have that $\{u,v\} \in E(H_m)$ because $|u \cap v| \leq k - 2^m \leq l$.
\end{proof}

\bigskip

The general idea in the proof of Theorem \ref{th:genKneser} is to first apply Proposition \ref{kneserdescomposicion} to get that the Nim-value of a generalized Kneser graph equals the Nim-value of the join of two (smaller) generalized Kneser graphs. Then, by Corollary \ref{joingraph}, we may apply iteratively Proposition \ref{kneserdescomposicion} to these smaller graphs until the number of vertices of the resulting graphs is $< 2^m$. Hence, we stop this procedure when the resulting graph is a join of several graphs of the form $\KG(n-t2^m, k - i2^m, l - i 2^m)$ for some $0 \leq i \leq t$ and $n -t2^m < 2^m$. It turns out that all these smaller graphs do not have edges and, thus, the resulting graph is a complete multipartite graph. Therefore we derive that the Nim-value of a generalized Kneser graph coincides with that of a certain complete multipartite graph, and apply Theorem \ref{compmultipartito} to
conclude the result.

\medskip

\noindent {\it Proof of Theorem \ref{th:genKneser}.} If $l = k-1$, then $m = 0$. Moreover, $\KG(n,k,l)$ equals the complete graph on $\binom{n}{k}$ vertices and, by  Theorem \ref{compmultipartito}, we have that ${\rm Nim}(\KG(n,k,l)) = (\binom{n}{k} \ {\rm mod} \ 3)$. Hence the result holds for $l = k-1$.

From now on, we assume that $l \leq k-2$. If $n \geq 2^m$, we apply Proposition \ref{kneserdescomposicion} and get that $${\rm Nim}(\KG(n,k,l)) = {\rm Nim}(\KG(n-2^m, k, l) + \KG(n-2^m, k-2^m, l-2^m)).$$
If $n - 2^m \geq 2^m$, we can apply again Proposition \ref{kneserdescomposicion} together with Corollary \ref{joingraph} and get that $$\begin{array}{llll} {\rm Nim}(\KG(n,k,l)) =  {\rm Nim}( &  \KG(n - 2 \cdot 2^m, k, l) + \\ & 2 \cdot \KG(n-2 \cdot 2^m, k-2^m, l-2^m) + \\ & \KG(n-2 \cdot 2^m, k-2\cdot 2^m, l-2 \cdot 2^m)).\end{array};$$
where $2 \cdot G$ denotes the join graph $G + G$ and, in general, $m \cdot G$ denotes $(m-1) \cdot G + G$ for $m \in \N$. If we set $t := \lfloor n/2^m \rfloor$ and repeat this argument we get that $${\rm Nim}(\KG(n,k,l)) = {\rm Nim}\left(\sum_{i = 0}^t \binom{t}{i} \cdot \KG(n-t \, 2^m, k-i  2^m, l-i 2^m)\right).$$
Moreover, for each $i \in \{0,\ldots,t\}$, by Lemma \ref{trivial}, we have that $\KG(n-t \, 2^m, k-i 2^m, l-i 2^m)$ is the empty graph if and only if $n - t2^m < k - i 2^m$ or $k - i 2^m < 0$; thus, it
only remains to consider the values $i$ such that $$\frac{k-n}{2^m} + t \leq i \leq \frac{k}{2^m}.$$
If there is no integer value in the interval $[\frac{k-n}{2^m} + t, \frac{k}{2^m}]$, then ${\rm Nim}(\KG(n,k,l)) = {\rm Nim}(\emptyset) = B$ and $\binom{n - \lfloor n/2^m \rfloor \,2^m}{k - \lfloor k/2^m \rfloor 2^m} = \binom{n\ {\rm mod}\ 2^m}{k\ {\rm mod}\ 2^m} = 0$, which is even and the result follows. If $[\frac{k-n}{2^m} + t, \frac{k}{2^m}] \cap \N \not= \emptyset$ then it has only one element $j := \lfloor k/2^m \rfloor$. So, $${\rm Nim}(\KG(n,k,l)) = {\rm Nim}\left(\binom{t}{j} \cdot \KG(n-t  2^m, k-j 2^m, l-j 2^m)\right).$$
We claim that $\KG(n-t 2^m, k-j 2^m, l-j 2^m)$ has no edges. Indeed, if we set $n' := n-t 2^m$, $k' :=  k-j 2^m$ and $l' := l-j 2^m$, we observe that if $l' < 0$, then $\KG(n',k',l')$ has no edges and if $l' \geq 0$, we have that the following inequalities hold
\[ 2 k' - l' \geq 2 (k' - l') = 2(k-l) > 2^m > n';\]
 thus, by Lemma \ref{trivial}, $\KG(n',k',l')$ has no edges. Hence, the resulting graph a complete $\binom{t}{j}$-partite graph and each of the partitions has  $\binom{n-t 2^m}{k-j 2^m}$ vertices.  Now the result follows from Theorem \ref{compmultipartito}. If $\binom{n-t 2^m}{k-j 2^m}$ is even, then ${\rm Nim}(\KG(n,k,l)) = 0$. Otherwise, it is $\binom{t}{j}\ ({\rm mod}\ 3)$. \hfill $\qed$

\bigskip

\begin{corollary}\label{whowinsgenKneser}Let $(n,k,l) \in \N^3$ with $l < k$  and set $m := \lceil \log_2(k-l) \rceil$. Then,

	\[ {\rm Nim}(\KG(n,k,l)) = A \textrm{\  if\ and\ only\ if \ } 2 \nmid \binom{n\ {\rm mod}\ 2^m}{k\ {\rm mod}\ 2^m} {\text \ and\ }3 \nmid {\lfloor n/2^m \rfloor  \choose \lfloor k /2^m \rfloor}. \]

\end{corollary}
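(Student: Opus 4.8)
The plan is to derive this immediately from Theorem \ref{th:genKneser} together with the general fact, recorded in the introduction, that for any finite poset $P$ with a minimum one has $\ch(P) = A$ if and only if ${\rm Nim}(P) \neq 0$. Applying this with $P$ the face poset of $\KG(n,k,l)$, it suffices to decide when the right-hand side of the formula in Theorem \ref{th:genKneser} is nonzero.

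That right-hand side is a product of two nonnegative integers: the first factor, $\binom{n\ {\rm mod}\ 2^m}{k\ {\rm mod}\ 2^m}\ {\rm mod}\ 2$, lies in $\{0,1\}$, and the second factor, $\binom{\lfloor n/2^m \rfloor}{\lfloor k/2^m \rfloor}\ {\rm mod}\ 3$, lies in $\{0,1,2\}$. A product of integers vanishes precisely when one of its factors vanishes, so ${\rm Nim}(\KG(n,k,l)) \neq 0$ if and only if both factors are nonzero, that is, if and only if $2 \nmid \binom{n\ {\rm mod}\ 2^m}{k\ {\rm mod}\ 2^m}$ and $3 \nmid \binom{\lfloor n/2^m \rfloor}{\lfloor k/2^m \rfloor}$. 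Combining this with the previous paragraph yields the stated equivalence.

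There is essentially no obstacle: the only point to observe is that the two factors are individually nonnegative residues (modulo $2$ and modulo $3$ respectively), so that "product nonzero $\Leftrightarrow$ every factor nonzero" applies with no sign subtleties. In particular the corollary carries no content beyond Theorem \ref{th:genKneser}; it is stated only to make the winner explicit. If desired, one could further rephrase the condition $2 \nmid \binom{n\ {\rm mod}\ 2^m}{k\ {\rm mod}\ 2^m}$ via Lucas' theorem as a digit condition on the low $m$ binary digits of $n$ and $k$, and likewise rewrite $3 \nmid \binom{\lfloor n/2^m \rfloor}{\lfloor k/2^m \rfloor}$ through Lucas' theorem modulo $3$, but such a reformulation is not needed for the statement as given.
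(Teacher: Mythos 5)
Your proposal is correct and is exactly the intended argument: the paper states this corollary without proof precisely because it follows immediately from Theorem \ref{th:genKneser} together with the fact (recorded in the introduction) that the first player wins if and only if the Nim-value is nonzero, and your observation that the product of the two residues vanishes exactly when one of them does is all that is needed (noting, as you implicitly do, that ``${\rm Nim}(\KG(n,k,l)) = A$'' is shorthand for $\ch(\KG(n,k,l)) = A$).
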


\bigskip

Given $p$ a prime number, a classical result by Kummer \cite{kummer} states that the biggest power of $p$ that divides $\binom{a+b}{a}$ (also called the $p$-valuation) coincides with the number of carries when $a$ and $b$ are added in base $p$. Also a nice result by Lucas \cite{lucas} provides a formula to compute the value of a binomial coefficient modulo $p$. Both results have as corollary that $p$ divides $\binom{a+b}{a}$ if and only if there is a carry when $a$ and $b$ are added in base $p$.  Thus one could restate Theorem \ref{th:genKneser} in terms of the binary and ternary encoding of $n,k$ and $l$.

\subsection{Chomp on the clique complex of $\KG(n,k,l)$} \label{cliquecomplex}
\

Given a graph $G = (V(G),E(G))$, we recall that a clique of $G$ is a subset $V' \subset V(G)$ such that the edge $\{u,v\} \in E(G)$ for all distinct $u, v \in V'$. We denote by $\mC(G)$ the {\it clique complex} of $G$, that is, the simplicial complex with vertex set $V(G)$, and whose faces are the
cliques of $G$. Moreover, for  $s \in \N$, we denote by $\mC_s(G)$ the {\it $(s-1)$-skeleton of $\mC(G)$}, that is, the simplicial complex whose vertex set is $V(G)$, and whose faces are the cliques of $G$ of size $\leq s$. We observe that $\mC(G) = \mC_s(G)$ for all $s \geq |V(G)|$ (or for any $s$ greater or equal to the clique number of the graph).
Since $\mC_0(G)$ is the empty simplicial complex and the only non-empty faces of $\mC_1(G)$ are the vertices of the graph, the interesting cases are when $s \geq 2$.
The faces of any simplicial complex are partially ordered by inclusion, hence, it makes sense to play chomp in $\mC_s(G)$ with $s \in \N$.

The following result, which has the same hypotheses of Lemma \ref{puntosfijos}, reduces the computation
of ${\rm Nim}(C_s(G))$ to the computation of the Nim-value of the clique complex of an induced subgraph of $G$. 

\begin{lemma}\label{puntosfijosclique} Let $s \in \N$, $G = (V(G),E(G))$ be a graph and $\varphi \in {\rm Aut}(G)$ such that
\begin{itemize}
\item[(a)] $\varphi \circ \varphi = {\rm id}_V$, and
\item[(b)]  $\{u, \varphi(u)\} \notin E$ for all $u \in V$.
\end{itemize}
Then, ${\rm Nim}(\mC_s(G)) = {\rm Nim}(\mC_s(H))$, where $H = (V(H),E(H))$ is the induced subgraph of $G$ whose vertices are the fixed points of $\varphi$, i.e., $V(H) =\{u \in V(G) \, \vert \, \varphi(u) = u\}$.
\end{lemma}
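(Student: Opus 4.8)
The plan is to mimic the proof of Lemma \ref{puntosfijos} (which is quoted from \cite{O}), but applied to the poset of faces of $\mC_s(G)$ rather than to the graph itself, using the pairing strategy induced by $\varphi$. First I would observe that $\varphi$ induces an automorphism $\Phi$ of the face poset of $\mC_s(G)$: since $\varphi$ is a graph automorphism, it maps cliques to cliques and preserves their size, so $\Phi(F) := \varphi(F)$ is a well-defined automorphism of $\mC_s(G)$, and $\Phi$ is an involution because $\varphi$ is. Moreover $\Phi$ commutes with the ``remove the up-set'' operation, so removing the up-set of $F$ and then applying $\Phi$ yields the same complex as applying $\Phi$ first and then removing the up-set of $\Phi(F)$.

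The core combinatorial point is the analogue of hypothesis (b): I claim that for every non-empty face $F$ of $\mC_s(G)$ that is \emph{not} fixed by $\Phi$, the set $F \cup \varphi(F)$ is \emph{not} a face of $\mC_s(G)$. Indeed, if $F \neq \varphi(F)$ then there is a vertex $u \in F$ with $\varphi(u) \notin F$ (otherwise $\varphi(F) \subseteq F$ and, by finiteness, $\varphi(F) = F$); then $F \cup \varphi(F)$ contains both $u$ and $\varphi(u)$, and by (b) the pair $\{u, \varphi(u)\}$ is not an edge of $G$, so $F \cup \varphi(F)$ is not a clique, hence not a face. In particular $F$ and $\varphi(F)$ cannot both lie in a common face, so neither one lies in the up-set of the other; removing the up-set of $F$ from $\mC_s(G)$ leaves $\varphi(F)$ available, and vice versa. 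This makes the involution $F \mapsto \varphi(F)$ a legal pairing of the non-fixed, non-minimal faces of $\mC_s(G)$ with the property that playing one of a pair never removes its partner.

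With these two facts the argument is the standard strategy-argument. If $\mathrm{Nim}(\mC_s(H)) = 0$, i.e.\ $B$ wins chomp on $\mC_s(H)$, then $B$ wins chomp on $\mC_s(G)$ as follows: whenever $A$ plays a face $F$ not contained in $H$ (equivalently $\varphi(F) \neq F$), player $B$ replies with $\varphi(F)$, which is still present by the claim and whose removal restores the $\Phi$-symmetry of the remaining complex; whenever $A$ plays a face of $\mC_s(H)$, player $B$ answers with the winning reply in the sub-game on $\mC_s(H)$ (this is consistent because the faces of $\mC_s(G)$ lying in $H$ that survive always form exactly the current position of the $H$-game). Symmetrically, if $\mathrm{Nim}(\mC_s(H)) > 0$ then $A$ opens with a winning move in $\mC_s(H)$ and then follows the same mirroring-plus-sub-game strategy. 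Hence $\mathrm{Nim}(\mC_s(G)) = 0$ iff $\mathrm{Nim}(\mC_s(H)) = 0$; to upgrade this to equality of Nim-values one argues by induction on the number of faces, exactly as for Lemma \ref{puntosfijos}: for any face $F$ of $\mC_s(G)$, the position after removing the up-set of $F$ again satisfies the hypotheses (with $\varphi$ restricted to the surviving vertices) when $\varphi(F) = F$, and when $\varphi(F) \neq F$ the two positions $(\mC_s(G))_F$ and $(\mC_s(G))_{\varphi(F)}$ are isomorphic via $\Phi$ and hence have equal Nim-value, so the ${\rm mex}$ defining $\mathrm{Nim}(\mC_s(G))$ ranges over the same multiset of values as the one defining $\mathrm{Nim}(\mC_s(H))$ up to repetitions that do not affect the ${\rm mex}$.

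The step I expect to be the main obstacle is the bookkeeping in the induction that turns the ``$B$-wins iff $B$-wins'' equivalence into full equality of Nim-values: one must verify carefully that after any move in $\mC_s(G)$ the reduced complex is again of the form ``$\mC_s$ of a graph with a fixed-point-free-on-edges involution,'' or is isomorphic to such a thing via $\Phi$, so that Lemma \ref{puntosfijos}'s inductive scheme applies verbatim with $\mC_s$ in place of the graph. Everything else — that $\Phi$ is a well-defined poset involution, that $F \cup \varphi(F)$ is never a face, and that the pairing strategy is legal — is a routine transcription of the graph case, replacing ``vertex or edge'' by ``clique of size $\le s$''.
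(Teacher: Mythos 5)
Your reduction of hypothesis (b) to faces is correct and useful: if $F\neq\varphi(F)$ then $F\cup\varphi(F)$ contains a non-adjacent pair $u,\varphi(u)$, so neither of $F,\varphi(F)$ lies in the up-set of the other, and the setwise-fixed faces are exactly the faces of $\mC_s(H)$. Your mirroring strategy then correctly shows that $B$ wins on $\mC_s(G)$ if and only if $B$ wins on $\mC_s(H)$. The genuine gap is in the upgrade to equality of Nim-values, and it is exactly where you flagged trouble. Your justification for the non-fixed moves --- that $(\mC_s(G))_F$ and $(\mC_s(G))_{\varphi(F)}$ are isomorphic via $\Phi$, ``so the {\rm mex} ranges over the same multiset of values up to repetitions'' --- does not work: {\rm mex} is insensitive to repetitions, but the common value of such an isomorphic pair need not be a repetition of any value coming from a fixed move; it could be a \emph{new} value, and if it happened to equal ${\rm mex}\{{\rm Nim}((\mC_s(H))_{F'})\}={\rm Nim}(\mC_s(H))$ it would change the overall {\rm mex}. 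What is actually needed for a non-fixed move $F$ is the observation that $\varphi(F)$ survives in $(\mC_s(G))_F$, and playing it produces the $\Phi$-symmetric position obtained by deleting both up-sets; by induction that position has Nim-value ${\rm Nim}(\mC_s(H))$, and since a position's Grundy value differs from the values of all its options, ${\rm Nim}((\mC_s(G))_F)\neq{\rm Nim}(\mC_s(H))$. Only with this extra step does the {\rm mex} over all moves collapse to the {\rm mex} over the fixed moves. A second, smaller issue you also noticed but did not resolve: after a move the remaining complex is in general no longer of the form $\mC_s(G')$ for a graph $G'$, so the induction cannot be run ``verbatim with $\mC_s$ in place of the graph''; it must be phrased for arbitrary $\Phi$-invariant down-sets of the face poset (equivalently, for posets with an order-preserving involution whose fixed points form a down-set).

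The paper sidesteps both difficulties by proving precisely such a general poset lemma and deducing the Nim-value equality without any {\rm mex} induction: letting $F'$ be a disjoint copy of the fixed subposet, by Theorem \ref{th:nimsum} the equality ${\rm Nim}(P)={\rm Nim}(F)$ is equivalent to ${\rm Nim}(P\cup_0 F')=0$, and the latter is shown by a mirroring strategy for $B$ on the sum (copy fixed moves between $F$ and $F'$, answer a non-fixed move $y$ by $\psi(y)$). If you want to repair your write-up with minimal changes, either insert the ``play $\varphi(F)$ and compare with an option'' argument into your induction, or replace the induction altogether by playing your pairing strategy on $\mC_s(G)\cup_0 \mC_s(H)'$ and invoking the Sprague--Grundy theorem, which is the paper's route.
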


Lemma \ref{puntosfijosclique} is a generalization of Lemma \ref{puntosfijos} and a direct consequence of the following one, which appeared in a preliminary version of \cite{fr}. Since this result disappeared
in the final version of \cite{fr}, we include here their original proof.

\begin{lemma} Let $(P,\leq)$ be a finite poset and let $\psi:  P \rightarrow P$ such that
\begin{itemize}
\item[(a)] $\psi \circ \psi = {\rm id}_P$,
\item[(b)]  $x \leq y$ if and only if $\psi(x) \leq \psi(y)$, and
\item[(c)] the subposet $F$ of $P$ with vertices $\{x \in P \, \vert \, \psi(x) = x\}$ (the fixed points of $\psi$), is a down-set.
\end{itemize}
Then, ${\rm Nim}(P) = {\rm Nim}(F)$.
\end{lemma}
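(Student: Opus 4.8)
The strategy is a strategy-stealing / pairing argument entirely analogous to the one sketched after Lemma~\ref{puntosfijos}, but carried out at the level of an arbitrary poset $P$ with the involution $\psi$. The key structural observation is that $\psi$ induces a fixed-point-free involution on $P \setminus F$: if $x \notin F$ then $\psi(x) \neq x$, and $\psi(\psi(x)) = x$, so the elements of $P \setminus F$ come in pairs $\{x, \psi(x)\}$. Moreover, condition (c) is exactly what guarantees that removing the up-set of a fixed point never touches $P \setminus F$ from below in an asymmetric way, and, crucially, that the up-set of a non-fixed point $x$ is "swapped" correctly: since $\psi$ is an order-isomorphism, $\psi$ maps the up-set of $x$ onto the up-set of $\psi(x)$, and because $F$ is a down-set the up-set of $x \notin F$ is disjoint from $F$.

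\textbf{Key steps.} First I would prove, by induction on $|P|$, the claim that ${\rm Nim}(P) = 0$ if and only if ${\rm Nim}(F) = 0$; in fact I would prove the stronger and cleaner statement ${\rm Nim}(P) = {\rm Nim}(F)$ directly by induction. The base case $P = \{0\}$ is trivial since then $F = \{0\}$. For the inductive step, I want to show the two posets have the same set of Nim-values reachable in one move, or rather that they have the same mex. The heart of the matter is the following: for $x \in P \setminus F$, I claim ${\rm Nim}(P_x) = {\rm Nim}(P_{\psi(x)})$. This holds because $\psi$ restricts to a poset isomorphism $P_x \to P_{\psi(x)}$: indeed $\psi$ maps the up-set of $x$ bijectively onto the up-set of $\psi(x)$ (using (b)), hence maps the complement $P_x$ isomorphically onto $P_{\psi(x)}$, and isomorphic posets have equal Nim-value. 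Consequently, the values $\{{\rm Nim}(P_x) : x \in P \setminus F \setminus \{0\}\}$ occur "in pairs" and contribute nothing new to a mex computation beyond what a single representative of each pair contributes — but I must be careful: pairs of equal values do not cancel in a mex the way they do in a XOR. So the right move is instead to compare $P$ with $F$ move-by-move among fixed points only.

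\textbf{The cleaner route.} I would argue as follows. For $x \in F$, consider $P_x$ (remove the up-set of $x$ in $P$) and $F_x$ (remove the up-set of $x$ in $F$). The restriction of $\psi$ to $P_x$ is again an involution satisfying (a), (b), and — since $F$ is a down-set in $P$, so $F \cap P_x$ is a down-set in $P_x$ — condition (c); its fixed-point subposet is exactly $F_x$. By the inductive hypothesis applied to $P_x$ (which is strictly smaller), ${\rm Nim}(P_x) = {\rm Nim}(F_x)$. Therefore $\{{\rm Nim}(P_x) : x \in F \setminus \{0\}\} = \{{\rm Nim}(F_x) : x \in F \setminus \{0\}\}$. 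It remains to check that adding the values ${\rm Nim}(P_x)$ for $x \in P \setminus F$ does not change the mex, i.e. that every such value already appears as ${\rm Nim}(P_{x'})$ for some $x' \in F$, OR — and this is what actually needs proving and is the main obstacle — that one can run the second player's pairing strategy to show $\ch(P) = \ch(F)$ and then bootstrap to equality of Nim-values. Concretely: if ${\rm Nim}(F) = 0$ then whenever the first player moves at $x \in P \setminus F$ the second player replies at $\psi(x)$, landing in a position isomorphic by an involution of the same type to the previous one (so by induction still value $0$ after the pair of moves is "absorbed"), and whenever the first player moves at a fixed point $x$ the position becomes $P_x$ with ${\rm Nim}(P_x) = {\rm Nim}(F_x) \neq 0$, so the second player can respond. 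This shows ${\rm Nim}(P) = 0 \iff {\rm Nim}(F) = 0$. Upgrading from this equivalence to full equality ${\rm Nim}(P) = {\rm Nim}(F)$ is then the standard trick: ${\rm Nim}(P) = n$ iff ${\rm Nim}(P \cup_0 C_n) = 0$ where $C_n$ is a chain of length $n$; applying the involution $\psi$ extended by the identity on $C_n$ to the poset $P \cup_0 C_n$ (whose fixed-point subposet is $F \cup_0 C_n$, still a down-set), the equivalence just proved gives ${\rm Nim}(P \cup_0 C_n) = 0 \iff {\rm Nim}(F \cup_0 C_n) = 0$, i.e. ${\rm Nim}(P) = n \iff {\rm Nim}(F) = n$. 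Hence ${\rm Nim}(P) = {\rm Nim}(F)$.

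The main obstacle, as flagged above, is that a naive mex comparison fails because equal Nim-values among the paired moves $x, \psi(x)$ do not cancel; the fix is to establish the $0 \iff 0$ equivalence first via the pairing strategy (together with the inductive hypothesis on the smaller positions $P_x$, $x \in F$) and then bootstrap to full equality using the $\cup_0$-with-a-chain characterization of Nim-values and Theorem~\ref{th:nimsum}.
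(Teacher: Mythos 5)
Your proof is correct, but it takes a genuinely different route from the paper's. The paper argues in one shot: by Theorem~\ref{th:nimsum}, ${\rm Nim}(P)={\rm Nim}(F)$ is equivalent to ${\rm Nim}(P\cup_0 F')=0$ for a disjoint copy $F'$ of $F$, and this is verified by an explicit mirroring strategy for the second player on $P\cup_0 F'$: copy any move made inside $F$ (resp.\ $F'$) to the other copy, and answer any move $y\in P\setminus F$ with $\psi(y)$; condition (c) guarantees these latter replies never disturb $F$ or $F'$. No induction is needed and the argument hands the second player an explicit strategy. You instead prove the outcome equivalence ${\rm Nim}(P)=0 \Leftrightarrow {\rm Nim}(F)=0$ by induction on $|P|$ (moves at fixed points are handled by the induction hypothesis applied to $P_x$, moves at $y\notin F$ by pairing with $\psi(y)$ and applying the hypothesis to the resulting $\psi$-invariant position, whose fixed-point set is still $F$), and then upgrade to equality of Nim-values by adjoining chains and invoking Theorem~\ref{th:nimsum}. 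This is sound: in the inductive step you only need the equivalence, not full equality, for the smaller positions (your phrase ``${\rm Nim}(P_x)={\rm Nim}(F_x)\neq 0$'' overstates what the induction gives, but nonvanishing is all you use); $\psi(y)$ is never in the up-set of $y$ (otherwise $y$ would be fixed), so the paired reply is always available; and the fixed-point set of the extended involution on $P\cup_0 C_n$ is $F\cup_0 C_n$, which is a down-set, so the bootstrap goes through. Two comparative remarks: your bootstrap could be streamlined by adjoining a disjoint copy of $F$ instead of a chain --- the equivalence then yields ${\rm Nim}(P\cup_0 F')=0$ directly, which is exactly the paper's reduction --- and the trade-off is that the paper's proof is shorter and directly constructive (a single explicit pairing strategy), while yours is more local and inductive, separating the combinatorial content (outcome equivalence) from the Sprague--Grundy bookkeeping at the cost of not exhibiting a strategy as transparently.
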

 \begin{proof}Let $F'$ be a copy of the poset $F$ disjoint from $P$. By Theorem \ref{th:nimsum},  ${\rm Nim}(P) = {\rm Nim}(F)$  if and only if ${\rm Nim}(P \cup_0 F') = 0$. Moreover, this is equivalent to check that  $\ch(P \cup_0 F') = B$. To prove the result we are going to devise a winning strategy for player $B$ on $P \cup_0 F'$. The strategy is the following:  whenever $A$ picks an element in $F$ (respect. in $F'$), then $B$ picks the same element in $F'$ (respect. in $F$) and, whenever $A$ picks an element $y \in P \setminus F$, then $B$ chooses $\psi(y)$. Since $F$ is a down-set, this latter pair of moves does not disturb $F$ or $F'$. Since $P$ is finite, player $A$ will eventually be forced to pick $0$ and, hence, this is a winning strategy for $B$.
 \end{proof}

An easy application of Lemma \ref{puntosfijosclique} yields the following result (which generalizes Theorem \ref{compmultipartito}).

\begin{proposition}\label{compmultipartitoclique} Let $s \in \N$, then,
\begin{center} ${\rm Nim}(\mC_s(\mK_{n_1,\ldots,n_r})) = {\rm Nim}(\mC_s(\mK_t))$; where $t := |\{i \, \vert \, n_i$ is odd$\}|$. \end{center}
\end{proposition}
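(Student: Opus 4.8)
The plan is to reduce the complete multipartite graph $\mK_{n_1,\ldots,n_r}$ to the complete graph $\mK_t$ by a sequence of involutions, each of which satisfies the hypotheses of Lemma~\ref{puntosfijosclique}, and then invoke that lemma repeatedly. Concretely, I would process the partition classes one at a time. Suppose $n_i \geq 2$ for some $i$; pick two distinct vertices $u, v$ in the $i$-th partition class and let $\varphi$ be the involution that swaps $u$ and $v$ and fixes every other vertex. Since $u$ and $v$ lie in the same partition class of a complete multipartite graph, $\{u,v\} \notin E$, so hypothesis (b) holds; hypothesis (a) is immediate. Moreover $\varphi$ is a graph automorphism because any vertex $w \notin \{u,v\}$ is adjacent to $u$ if and only if $w$ is not in the $i$-th class, which holds if and only if $w$ is adjacent to $v$. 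By Lemma~\ref{puntosfijosclique}, ${\rm Nim}(\mC_s(\mK_{n_1,\ldots,n_r})) = {\rm Nim}(\mC_s(H))$, where $H$ is the induced subgraph on the fixed points, i.e., $\mK_{n_1,\ldots,n_r}$ with one vertex of the $i$-th class deleted; this is again complete multipartite with the $i$-th size decreased by one (and the class disappearing entirely if it drops to $0$).

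Next I would iterate this step. Each application of the involution removes one vertex from a partition class of size $\geq 2$ without changing the Nim-value of the $s$-skeleton of the clique complex. After finitely many steps, every remaining partition class has size exactly $1$. A partition class of original even size $n_i$ gets reduced through sizes $n_i, n_i-1, \ldots, 1$ — wait, that would leave it at size $1$, not $0$; so I must be careful. The reduction from size $2$ to size $1$ is one legitimate application of the involution (swap the two vertices, keep one fixed point). To eliminate a class entirely I cannot use this involution once the class has size $1$. The correct bookkeeping is: a class of even size $n_i$ can be reduced to size $0$ only if I pair it up differently — but actually size $1$ is odd, so a class starting even cannot reach $0$ via single-vertex deletions ending at an odd parity. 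Let me reconsider: the right claim is that a class of size $n_i$ reduces, via $n_i - (n_i \bmod 2)$ applications removing vertices two-at-a-time in net effect — no. I should instead simply keep deleting one vertex at a time as long as the class has size $\geq 2$; this terminates with the class at size $n_i \bmod 2 \in \{0,1\}$, except the process only makes sense while size $\geq 2$, so it stops at size $1$ when $n_i$ is odd and at size $2$ when $n_i$ is even — no, from size $2$ I apply it once more and reach size $1$. Hmm, but then even classes also end at size $1$. The resolution: to remove an even class completely, after reducing it to size $2$, pair those two vertices with nothing fixed; the fixed-point subgraph has that class empty. This is a valid involution (an involution on a $2$-element class with no fixed points still satisfies (a) and (b), and is still a graph automorphism), so ${\rm Nim}(\mC_s(\cdot))$ is unchanged and the class vanishes. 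Thus each class of even size is removed entirely and each class of odd size is reduced to a single vertex, leaving $\mK_t$ with $t = |\{i : n_i \text{ odd}\}|$.

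So the key steps are: (1) verify that swapping two vertices in a common partition class — or swapping the two vertices of a size-$2$ class with no fixed points — is an automorphism satisfying (a), (b) of Lemma~\ref{puntosfijosclique}; (2) apply the lemma to peel off vertices until each class has size $0$ or $1$; (3) observe that parity is the only invariant that survives, yielding $\mK_t$. The main obstacle is purely the bookkeeping of the elimination: making sure the terminal configuration is exactly $\mK_t$ and not, say, $\mK_{t'}$ with some leftover doubletons, which is why the two flavours of involution (with and without fixed points within the targeted class) are both needed. I would present this as a short induction on $\sum_i n_i - t$, the number of "excess" vertices: if this is $0$ every class has size $1$ and we are done; otherwise some class has size $\geq 2$, apply the appropriate involution to decrease it by one (or by two, removing the class) and conclude by the induction hypothesis together with Lemma~\ref{puntosfijosclique}.

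Here is the proof as I would write it.

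\begin{proof}
Write $N := \sum_{i=1}^r n_i$ for the number of vertices. We argue by induction on $N - t$. If $N - t = 0$, then every $n_i$ is odd and hence equal to $1$, so $\mK_{n_1,\ldots,n_r} = \mK_r = \mK_t$ and there is nothing to prove.

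Suppose $N - t \geq 1$. Then some partition class, say the $i$-th, has $n_i \geq 2$. Pick two distinct vertices $u, v$ of this class and let $\varphi \colon V \to V$ be the transposition exchanging $u$ and $v$ and fixing all other vertices. Since $u$ and $v$ belong to the same class of the complete multipartite graph, $\{u,v\} \notin E$, so condition (b) of Lemma~\ref{puntosfijosclique} holds, and $\varphi$ is clearly an involution. Finally $\varphi$ is an automorphism: a vertex $w \notin \{u,v\}$ is adjacent to $u$ precisely when $w$ does not lie in the $i$-th class, which is the same condition for adjacency to $v$. By Lemma~\ref{puntosfijosclique}, ${\rm Nim}(\mC_s(\mK_{n_1,\ldots,n_r})) = {\rm Nim}(\mC_s(H))$, where $H$ is obtained by deleting the vertex $v$; that is, $H$ is the complete multipartite graph with class sizes $n_1,\ldots,n_{i-1},n_i-1,n_{i+1},\ldots,n_r$, where a class of size $0$ is simply omitted. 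If $n_i$ was even, this decreases the count of odd classes by one but also decreases $N$ by one, so the quantity $N - t$ drops by one; if $n_i$ was odd, the count of odd classes increases by one while $N$ drops by one, so again $N - t$ drops by one. In all cases, $H$ has strictly smaller value of this parameter, and by the induction hypothesis ${\rm Nim}(\mC_s(H)) = {\rm Nim}(\mC_s(\mK_{t'}))$ where $t'$ is the number of odd classes of $H$. A direct check shows $t' = t$: decreasing an even $n_i$ to the odd number $n_i - 1$ turns it into an odd class, but in the original graph we had counted the odd classes as $t$, and one verifies (by tracking parities through the single step) that the number of odd-sized classes of $H$ equals the number of odd-sized classes of the original graph. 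This completes the induction.
\end{proof}
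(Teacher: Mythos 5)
Your overall strategy---peeling vertices off the partition classes by transpositions and invoking Lemma \ref{puntosfijosclique} at each step---is viable, but the written proof misapplies the lemma at the crucial point. If $\varphi$ is the transposition exchanging $u$ and $v$ and fixing all other vertices, then the fixed-point set excludes \emph{both} $u$ and $v$, so the graph $H$ furnished by Lemma \ref{puntosfijosclique} is the complete multipartite graph with the $i$-th class shrunk by \emph{two}, not the graph ``obtained by deleting the vertex $v$.'' All of your subsequent bookkeeping rests on the one-vertex reading and fails there: deleting a single vertex from a class flips its parity, so the number of odd-sized classes of $H$ would be $t+1$ or $t-1$, never $t$, and the asserted ``direct check shows $t'=t$'' is false as stated; likewise a class can never ``drop to $0$'' by removing one vertex at a time. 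The same misreading is what forced the detour in your preliminary discussion about needing a second ``flavour'' of involution for size-two classes---with the correct reading, swapping the two vertices of a size-two class already removes that class entirely, and no special case is needed.

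The repair is immediate and in fact simplifies the argument: with the correct two-vertex deletion every step preserves the parity of every class, so $t'=t$ holds trivially, the number of vertices drops by $2$, and your induction terminates with every class of size $0$ or $1$, i.e.\ with $\mK_t$. The paper's own proof is the one-shot version of this: apply Lemma \ref{puntosfijosclique} a single time to the involution that pairs up $2\lfloor n_i/2\rfloor$ vertices inside each class (these pairs are non-adjacent, and the map is an automorphism) and fixes one vertex in each odd class; its fixed points induce exactly $\mK_t$. Either route is fine once the fixed-point set is identified correctly, but as written your proof does not go through.
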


Thus, if one follows the lines of the proof of Theorem \ref{th:genKneser} replacing Lemma \ref{puntosfijos} by Lemma \ref{puntosfijosclique} and Theorem \ref{compmultipartito} by Proposition \ref{compmultipartitoclique}, one gets  the following result:

\begin{theorem}\label{th:genKnesercliquetrunc}Let $s \in \N$, $(n,k,l) \in \N^3$ with $l < k$  and set $m := \lceil \log_2(k-l) \rceil$. Then,

	\[ {\rm Nim}(\mC_s(\KG(n,k,l))) =  \left( \binom{n\ {\rm mod}\ 2^m}{k\ {\rm mod}\ 2^m} \ {\rm mod}\ 2 \right) \cdot {\rm Nim}(\mC_s(\mK_t)), \]
	where $t = {{\lfloor n/2^m \rfloor}  \choose {\lfloor k /2^m \rfloor}}$.

\end{theorem}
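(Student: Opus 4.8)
The statement is obtained by running the proof of Theorem~\ref{th:genKneser} in parallel, only keeping track of the skeleton parameter $s$ and replacing the two structural ingredients by their skeleton analogues. Concretely, I would first record the analogue of Corollary~\ref{joingraph} for skeleta: if $G = G_1 + G_2$ and each $G_i$ carries a fixed-point-free involution $\varphi_i$ as in Lemma~\ref{puntosfijosclique}, then ${\rm Nim}(\mC_s(G)) = {\rm Nim}(\mC_s(H_1 + H_2))$, where $H_i$ is the fixed-point subgraph. This follows exactly as Corollary~\ref{joingraph} follows from Lemma~\ref{puntosfijos}: one extends $\varphi_1,\varphi_2$ to an involution $\varphi$ of $G$, checks that $\varphi$ has no edge $\{u,\varphi(u)\}$ (here one uses that $V_1 \times V_2 \subseteq E$ but $\varphi$ maps $V_1$ to $V_1$ and $V_2$ to $V_2$), and invokes Lemma~\ref{puntosfijosclique} instead of Lemma~\ref{puntosfijos}. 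Likewise, the permutation lemma (Lemma~\ref{permutacion}) and the reduction lemmas (Lemmas~\ref{kneserlema} and Proposition~\ref{kneserdescomposicion}) go through verbatim with $\mC_s(-)$ in place of the graph itself, since their proofs only use that a suitable involution $\varphi_\pi$ is a fixed-point-free automorphism — the same hypothesis required by Lemma~\ref{puntosfijosclique}.

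Second, with these skeleton versions of the reduction tools in hand, I would repeat the decomposition argument of the proof of Theorem~\ref{th:genKneser} literally. Setting $t := \lfloor n/2^m\rfloor$ and $j := \lfloor k/2^m\rfloor$, iterating the (skeleton version of) Proposition~\ref{kneserdescomposicion} together with the (skeleton version of) Corollary~\ref{joingraph} yields
\[
{\rm Nim}(\mC_s(\KG(n,k,l))) = {\rm Nim}\!\left(\mC_s\!\left(\sum_{i=0}^{t}\binom{t}{i}\cdot \KG(n - t\,2^m,\, k - i\,2^m,\, l - i\,2^m)\right)\right).
\]
Exactly as in the original proof, all but (at most) the $i = j$ summand are empty graphs, and the graph $\KG(n - t\,2^m,\, k - j\,2^m,\, l - j\,2^m)$ has no edges; its vertex set has $\binom{n\ {\rm mod}\ 2^m}{k\ {\rm mod}\ 2^m}$ elements. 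Hence the resulting complex is $\mC_s$ of a complete $\binom{t}{j}$-partite graph whose parts all have size $\binom{n\ {\rm mod}\ 2^m}{k\ {\rm mod}\ 2^m}$.

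Third, I would finish by applying Proposition~\ref{compmultipartitoclique}: the complete multipartite graph $\mK_{n_1,\ldots,n_r}$ appearing above has all $n_i$ equal to $N := \binom{n\ {\rm mod}\ 2^m}{k\ {\rm mod}\ 2^m}$, so the number of odd parts is $\binom{t}{j}$ if $N$ is odd and $0$ if $N$ is even. Therefore ${\rm Nim}(\mC_s(\KG(n,k,l))) = {\rm Nim}(\mC_s(\mK_{\binom{t}{j}}))$ when $N$ is odd, and equals ${\rm Nim}(\mC_s(\mK_0)) = {\rm Nim}(\emptyset) = 0$ when $N$ is even; both cases are summarized by the formula $\bigl(N\ {\rm mod}\ 2\bigr)\cdot {\rm Nim}(\mC_s(\mK_t))$ with $t = \binom{\lfloor n/2^m\rfloor}{\lfloor k/2^m\rfloor}$, using that ${\rm Nim}(\mC_s(\mK_0))=0$. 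One must also dispose of the degenerate cases separately, just as in Theorem~\ref{th:genKneser}: when $l = k-1$ one has $m = 0$ and $\KG(n,k,l) = \mK_{\binom{n}{k}}$, so the claim is immediate; and when the interval $[\frac{k-n}{2^m}+t,\frac{k}{2^m}]$ contains no integer, the graph reduces to the empty complex and $\binom{n\ {\rm mod}\ 2^m}{k\ {\rm mod}\ 2^m} = 0$, matching the formula.

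\textbf{Main obstacle.} The only real work is checking that the skeleton analogues of Corollary~\ref{joingraph}, Lemma~\ref{permutacion}, Lemma~\ref{kneserlema} and Proposition~\ref{kneserdescomposicion} are genuinely true with $\mC_s(-)$ inserted — i.e., that every place the original proofs invoke Lemma~\ref{puntosfijos} can instead invoke Lemma~\ref{puntosfijosclique}. This is routine because all those proofs only ever use that a certain map is a fixed-point-free automorphic involution, which is exactly the hypothesis of Lemma~\ref{puntosfijosclique}; no property specific to the $1$-skeleton (i.e., to graphs rather than complexes) is used. The subtlety to watch is that in Proposition~\ref{kneserdescomposicion} one needs $\mC_s$ of a join to decompose correctly under the involution — but since the involution respects the bipartition $V(G_1)\sqcup V(G_2)$ and fixes the full edge set $V(G_1)\times V(G_2)$, the fixed-point complex is indeed $\mC_s(H_1 + H_2)$, so nothing breaks.
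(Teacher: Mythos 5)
Your proposal is correct and is essentially the paper's own argument: the paper proves Theorem~\ref{th:genKnesercliquetrunc} precisely by rerunning the proof of Theorem~\ref{th:genKneser} with Lemma~\ref{puntosfijos} replaced by Lemma~\ref{puntosfijosclique} and Theorem~\ref{compmultipartito} replaced by Proposition~\ref{compmultipartitoclique}. You merely spell out the routine verifications (skeleton versions of Corollary~\ref{joingraph}, Lemma~\ref{permutacion}, Lemma~\ref{kneserlema}, Proposition~\ref{kneserdescomposicion} and the final parity count) that the paper leaves implicit, and these all check out.
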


So, the computation of the Nim-value of the $(s-1)$-skeleton of the clique complex of any generalized Kneser graph reduces to the one of ${\rm Nim}(\mC_s(\mK_t))$ for some $t \in \N$.
The value of ${\rm Nim}(\mC_s(\mK_t))$ is not known in general. When $s \leq 2$, by Theorem \ref{compmultipartito}, we have that ${\rm Nim}(\mC_s(\mK_t)) = (t \ {\rm mod}\ (s+1))$. For any value of $s \in \N$, Gale and Neyman \cite{GN} conjectured that ${\rm Nim}(\mC_s(\mK_t)) = 0$ if and only if $s+1$ divides $t$, however, Brauer and Christensen \cite{BC} disproved this conjecture by showing that the first player loses in the chomp game on $\mC_3(\mK_7)$ and, thus, $\mC_3(\mK_7) = 0$. When $s \geq t$, the simplicial complex $\mC_s(\mK_t) = \mC(\mK_t)$ has a maximum (the set of all vertices of the graph), and the 'strategy stealing' argument of Remark \ref{ganaAmax} yields $\ch(\mC(\mK_t)) = A$ for all $t \geq 1$ and, hence, ${\rm Nim}(\mC(\mK_t)) > 0$. Even if we know that $\ch(\mC(\mK_t)) = A$ for all $t \geq 1$, the problem of finding an explicit winning strategy for this simplicial complex is still open. Indeed, it was conjectured by Gale and Neyman \cite{GN} that taking the maximum is the (unique) winning move and proved in the same paper that it this is true for $n \leq 5$, and later by Christensen and Tilford \cite{CT} for $n = 6$. Recently, Brauer and Christensen \cite{BC} have proved that this is no longer true for $n = 7$, where the winning move is to take a set of $4$ elements.

As a consequence of Theorem \ref{th:genKnesercliquetrunc} and the fact that $\ch(\mC(\mK_t)) = A$ if and only if $t \geq 1$, we have the following result which characterizes which
player has a winning strategy in the clique complex of any generalized Kneser graph:

\begin{corollary}\label{th:genKneserclique}Let $(n,k,l) \in \N^3$ with $l < k \leq n$  and set $m := \lceil \log_2(k-l) \rceil$. Then,

	\[ \ch(\mC(\KG(n,k,l))) =  A \ \Longleftrightarrow \  \binom{n \ {\rm mod}\  2^m}{k \ {\rm mod}\ 2^m} {\text \ is \ odd}.\]

\end{corollary}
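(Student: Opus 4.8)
The plan is to deduce Corollary~\ref{th:genKneserclique} directly from Theorem~\ref{th:genKnesercliquetrunc} by taking $s$ large enough that the $(s-1)$-skeleton is the full clique complex. Concretely, fix $s \geq \binom{n}{k}$ (which certainly exceeds the clique number of $\KG(n,k,l)$), so that $\mathcal{C}_s(\KG(n,k,l)) = \mathcal{C}(\KG(n,k,l))$ and likewise $\mathcal{C}_s(\mK_t) = \mathcal{C}(\mK_t)$ for the relevant $t$. Then Theorem~\ref{th:genKnesercliquetrunc} gives
\[
{\rm Nim}(\mathcal{C}(\KG(n,k,l))) = \left( \binom{n\ {\rm mod}\ 2^m}{k\ {\rm mod}\ 2^m} \ {\rm mod}\ 2 \right) \cdot {\rm Nim}(\mathcal{C}(\mK_t)), \qquad t = \binom{\lfloor n/2^m \rfloor}{\lfloor k/2^m \rfloor}.
\]
Since $\ch(P) = A$ exactly when ${\rm Nim}(P) \neq 0$, and a product of nonnegative integers is nonzero iff both factors are, the task reduces to analyzing when each factor vanishes.

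The first factor $\binom{n\ {\rm mod}\ 2^m}{k\ {\rm mod}\ 2^m} \ {\rm mod}\ 2$ is nonzero precisely when $\binom{n\ {\rm mod}\ 2^m}{k\ {\rm mod}\ 2^m}$ is odd, which is exactly the condition appearing in the statement. For the second factor I would invoke the fact, recalled in the paragraph preceding the corollary, that $\mathcal{C}(\mK_t)$ has a global maximum (the full vertex set) whenever $t \geq 1$, so by the strategy-stealing argument of Remark~\ref{ganaAmax} we get $\ch(\mathcal{C}(\mK_t)) = A$, hence ${\rm Nim}(\mathcal{C}(\mK_t)) > 0$, for every $t \geq 1$. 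The one subtlety is the degenerate case $t = 0$: here $\binom{\lfloor n/2^m \rfloor}{\lfloor k/2^m \rfloor} = 0$ means $\lfloor k/2^m\rfloor > \lfloor n/2^m\rfloor$, and then $\mathcal{C}(\mK_0)$ is the empty complex with Nim-value $0$. However, one checks that in this situation the first factor already vanishes: if $\lfloor k/2^m \rfloor > \lfloor n/2^m \rfloor$ while $k \leq n$, then necessarily $k \bmod 2^m > n \bmod 2^m$, forcing $\binom{n\ {\rm mod}\ 2^m}{k\ {\rm mod}\ 2^m} = 0$. So in every case ${\rm Nim}(\mathcal{C}(\KG(n,k,l))) \neq 0$ iff $\binom{n\ {\rm mod}\ 2^m}{k\ {\rm mod}\ 2^m}$ is odd, which is the claimed equivalence.

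I do not expect any real obstacle here: the corollary is essentially a bookkeeping consequence of the already-established Theorem~\ref{th:genKnesercliquetrunc} together with the elementary observation that $t \geq 1 \iff \ch(\mathcal{C}(\mK_t)) = A$. The only point requiring a line of care is ruling out the interaction between the two factors in the boundary case $t = 0$, handled by the $k \leq n$ hypothesis as above. It is worth remarking, as the authors already do, that unlike all the other main results in the paper this argument is non-constructive, since it rests on the strategy-stealing of Remark~\ref{ganaAmax}, and no explicit winning strategy for $\mathcal{C}(\mK_t)$ is known in general.
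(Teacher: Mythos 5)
Your proposal is correct and takes essentially the same route as the paper, which deduces the corollary directly from Theorem~\ref{th:genKnesercliquetrunc} (with $s$ large enough that $\mC_s=\mC$) together with the strategy-stealing fact of Remark~\ref{ganaAmax} that $\ch(\mC(\mK_t))=A$, hence ${\rm Nim}(\mC(\mK_t))>0$, for all $t\geq 1$. One minor remark: under the hypothesis $k\leq n$ one always has $\lfloor k/2^m\rfloor\leq\lfloor n/2^m\rfloor$, so the degenerate case $t=0$ never actually arises; your treatment of it is vacuously true and harmless, but the cleaner observation is simply that $k\leq n$ forces $t\geq 1$.
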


\section{Chomp on Johnson graphs}\label{sec:John}

In this short section we will study the chomp game on Johnson graphs.

\begin{definition} For every  pair $(n,k) \in \N^2$ with $0 \leq k \leq n$, the Johnson graph $\J(n,k)$ is the graph whose vertices correspond to the $k$-element subsets of a set of $n$ elements, and where two vertices are adjacent if and only if the two corresponding sets intersect in exactly $k-1$ elements (see Figure \ref{fig:johnson} for an example).
\end{definition}
\begin{figure}
\includegraphics[scale=.6]{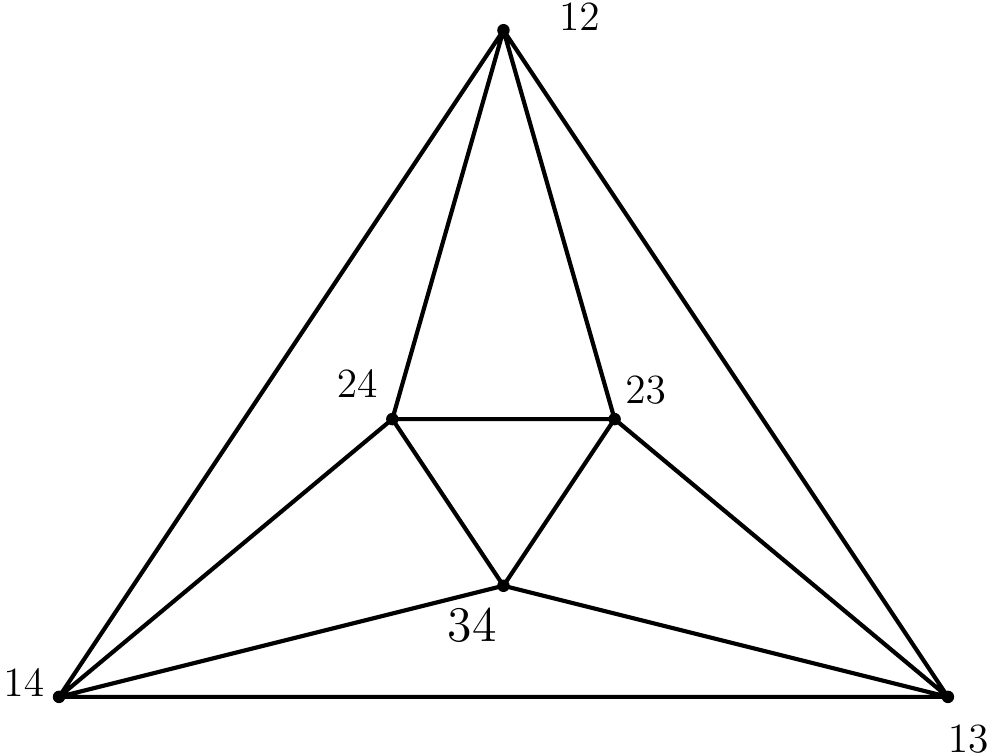}
\caption{The Johnson graph $\J(4,2)$.}
\end{figure}\label{fig:johnson}

For Johnson graphs, we will follow a similar strategy as in the previous section and see how we can exploit the symmetries of Johnson graphs to obtain the Nim-value when either both $n$ and $k$ are even (Proposition \ref{nkpares}), and when $n = 2k$ (Proposition \ref{n2k}). In order to show that the same strategy cannot be used to obtain the Nim-value of any Johnson graph, we will see in Proposition \ref{noinvJohnson} that for all other pairs $(n,k)$, there are no nontrivial involutions satisfying the hypotheses of Lemma \ref{puntosfijos}. In fact, for these pairs we do not know which player has a winning strategy. The smallest such Johnson graph  is $\mathcal{J}(5,2)$ -- the complement of the Petersen graph.

\begin{proposition}\label{nkpares}Let $(n,k) \in \N^2$ with $0 \leq k \leq n$. If $k$ and $n$ are even, then:
\[ {\rm Nim}(\J(n,k)) = \binom{n}{k} \ {\rm mod} \ 2. \]
\end{proposition}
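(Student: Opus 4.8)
The approach mirrors the one used for generalized Kneser graphs in Section~\ref{sec:genKne}: exploit symmetry via Lemma~\ref{puntosfijos} to reduce $\J(n,k)$ to a much smaller induced subgraph whose Nim-value we can read off directly. The key observation is that when $n$ is even, we can pair up the ground set $[1,n]$ into $n/2$ pairs and let $\pi$ be the corresponding fixed-point-free involution in $\Sigma_n$, which induces an automorphism $\varphi_\pi$ of $\J(n,k)$. For $\varphi_\pi$ to satisfy hypothesis (b) of Lemma~\ref{puntosfijos} we need $|u \cap \varphi_\pi(u)| \neq k-1$ for every $k$-subset $u$; since swapping an element of $u$ with its partner changes $u$ in an \emph{even} number of coordinates whenever the partners are not both in or both out of $u$, and when $k$ is even one checks that $|u \cap \varphi_\pi(u)|$ is always congruent to $k$ modulo $2$, hence never equal to the odd number $k-1$. (This is exactly the kind of parity argument that makes the ``both $n$ and $k$ even'' hypothesis essential.) So first I would verify carefully that $\{u,\varphi_\pi(u)\} \notin E(\J(n,k))$ under the stated hypotheses.

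\textbf{The reduction.} Applying Lemma~\ref{puntosfijos}, ${\rm Nim}(\J(n,k)) = {\rm Nim}(H)$ where $H$ is induced on the fixed points of $\varphi_\pi$, i.e.\ on those $k$-subsets $u$ such that for each pair $\{a,b\}$ of the chosen perfect matching of $[1,n]$, either both $a,b \in u$ or both $a,b \notin u$. Such a $u$ is determined by choosing $k/2$ of the $n/2$ pairs, so $V(H)$ has $\binom{n/2}{k/2}$ vertices. I would then analyze the edges of $H$: two fixed sets $u,u'$ agree on whole pairs, so $|u \cap u'|$ is always even, hence \emph{never} equal to $k-1$. Therefore $H$ has no edges at all. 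The Nim-value of an edgeless graph is the number of its vertices modulo $2$, so ${\rm Nim}(\J(n,k)) = \binom{n/2}{k/2} \bmod 2$. To finish, I would invoke Kummer's / Lucas' theorem (cited in the paper after Corollary~\ref{whowinsgenKneser}): $\binom{n/2}{k/2}$ is odd if and only if there is no carry adding $k/2$ and $(n-k)/2$ in base $2$, which happens if and only if there is no carry adding $k$ and $n-k$ in base $2$ (multiplying both summands by $2$ just shifts the binary digits), i.e.\ if and only if $\binom{n}{k}$ is odd. Thus $\binom{n/2}{k/2} \equiv \binom{n}{k} \pmod 2$ and the formula follows.

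\textbf{Edge cases and the main obstacle.} I would separately dispose of the degenerate cases: if $\J(n,k)$ is already edgeless (namely $k=0$, $k=n$, or more generally whenever the graph has no edges), the claimed formula holds trivially since then the Nim-value equals $\binom{n}{k} \bmod 2$ directly; and the general argument above also covers these since the reduction just produces another edgeless graph. The genuinely delicate point — and the step I expect to require the most care — is the parity bookkeeping showing $|u \cap \varphi_\pi(u)| \equiv k \pmod 2$ for \emph{all} $k$-subsets $u$ (not merely the fixed ones), which is what guarantees hypothesis (b) of Lemma~\ref{puntosfijos}; this is where evenness of $k$ is used, and it is instructive to note that the argument genuinely breaks for odd $k$ or odd $n$, consistent with Proposition~\ref{noinvJohnson}. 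Everything else is routine: the vertex count of $H$ is a direct combinatorial identity, the edgelessness of $H$ is immediate from the ``whole pairs'' description, and the congruence $\binom{n/2}{k/2} \equiv \binom{n}{k} \pmod 2$ is a one-line consequence of the binary-carry characterization already recalled in the text.
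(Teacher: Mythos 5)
Your proposal is correct and follows essentially the same route as the paper: the same pairing involution $\varphi_\pi$ on $[1,n]$, verification of hypothesis (b) of Lemma~\ref{puntosfijos} via the evenness of $|u \cap \varphi_\pi(u)|$ against the odd value $k-1$, reduction to the edgeless graph on the $\binom{n/2}{k/2}$ fixed points, and the parity identity $\binom{n/2}{k/2} \equiv \binom{n}{k} \pmod 2$. The only difference is cosmetic: you justify the final congruence explicitly via Kummer/Lucas, which the paper leaves as an observation.
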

\begin{proof}Set $\pi := (1,\ 2)(3, \ 4) \cdots (n-1,\  n) \in \Sigma_n$ and consider $\varphi_{\pi}$ the map on the vertices of  $\J(n,k)$ defined as $$\varphi_{\pi}(\{b_1,\ldots,b_k\}) := \{\pi(b_1),\ldots,\pi(b_k)\}.$$
Since $\pi$ is a permutation of order $2$, then $\varphi_{\pi}$ is an involution. Moreover,  $\varphi_{\pi}$ is an endomorphism of $G$ (indeed, it is an isomorphism). Moreover, $\{u, \varphi_{\pi}(u)\}$ is never an edge of $\J(n,k)$ because by construction $|u \cap \varphi_{\pi}(u)|$ is even and $k-1$ is odd. Thus, the hypotheses of Lemma \ref{puntosfijos} are satisfied and ${\rm Nim}(\J(n,K)) = {\rm Nim}(H)$, where $H$ is the subgraph induced by the fixed points of $\varphi(\pi)$. We observe that for any two vertices $u,v$, if $u = \varphi_{\pi}(u)$ and $v = \varphi_{\pi}(v)$, then $|u \cap v|$ is even and, hence, $\{u,v\}$ is not an edge. Since $H$ is a graph without edges and with $\binom{n/2}{k/2}$ vertices, ${\rm Nim}(H)$ equals $\binom{n/2}{k/2}\ {\rm mod} \ 2$ . To finish the proof it suffices to observe that  $\binom{n/2}{k/2}$ and $\binom{n}{k}$ have the same parity because both $n$ and $k$ are even.
\end{proof}

\begin{proposition}\label{n2k}Let $k \in \N$, then \[ {\rm Nim}(\J(0,0))  = 1,\, {\rm Nim}(\J(2,1)) = 2	 {\text \ and \ } {\rm Nim}(\J(2k,k)) = 0 {\text \ for \ all \ } k \geq 2.\]
\end{proposition}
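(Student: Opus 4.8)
The plan is to handle the three cases separately, with the first two being trivial base cases and the third, $\J(2k,k)$ for $k\geq 2$, being the substantive one. For $\J(0,0)$: the graph has a single vertex (the empty set) and no edges, so its Nim-value is $1$. For $\J(2,1)$: the vertices are $\{1\}$ and $\{2\}$, which intersect in $0=k-1$ elements, so this is $\mK_2$, and by Theorem \ref{compmultipartito} its Nim-value is $2\bmod 3 = 2$. These dispense with the two exceptional small cases.

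For $\J(2k,k)$ with $k\geq 2$, I would use the antipodal involution. Fix the ground set $[1,2k]$ and let $\varphi$ send a $k$-set $u$ to its complement $[1,2k]\setminus u$. This is an involution on the vertex set. Crucially, $u$ and $\varphi(u)$ are disjoint, so $|u\cap\varphi(u)| = 0 \neq k-1$ as long as $k\geq 2$ (this is exactly where the hypothesis $k\geq 2$ enters — for $k=1$ disjoint singletons \emph{are} adjacent, which is why $\J(2,1)$ is exceptional); hence $\{u,\varphi(u)\}$ is never an edge of $\J(2k,k)$. Moreover $\varphi$ preserves adjacency, since $|\varphi(u)\cap\varphi(v)| = |u\cap v|$, so $\varphi$ is an automorphism satisfying both hypotheses of Lemma \ref{puntosfijos}. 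The key point is that $\varphi$ has \emph{no} fixed points: a $k$-set cannot equal its own complement in a $(2k)$-set. Therefore the induced subgraph $H$ on the fixed-point set is the empty graph, and Lemma \ref{puntosfijos} gives ${\rm Nim}(\J(2k,k)) = {\rm Nim}(H) = {\rm Nim}(\emptyset) = 0$.

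I do not anticipate a serious obstacle here; the only thing to be careful about is the verification that $k\geq 2$ is genuinely needed (so that the excluded-edge condition (b) of Lemma \ref{puntosfijos} holds) and the observation that the fixed-point set is empty, which is what makes the conclusion clean rather than merely a reduction. One should also double-check the degenerate-looking case $k=2$, i.e. $\J(4,2)$: here $\varphi$ pairs up the $\binom{4}{2}=6$ vertices into $3$ antipodal pairs, none of which is an edge, and indeed the resulting empty graph on $0$ fixed vertices gives Nim-value $0$, consistent with the picture in Figure \ref{fig:johnson}. This confirms the argument is not vacuous. The proof then concludes by combining the three cases.
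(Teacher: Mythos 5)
Your proposal is correct and follows essentially the same route as the paper: both handle $\J(0,0)=\mK_1$ and $\J(2,1)=\mK_2$ directly, and for $k\geq 2$ both use the complementation involution $u\mapsto [1,2k]\setminus u$, note it is fixed-point-free and never maps a vertex to a neighbour, and conclude via Lemma \ref{puntosfijos} that the Nim-value is $0$. Your extra remarks on why $k\geq 2$ is needed and the sanity check on $\J(4,2)$ are just elaborations of the same argument.
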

\begin{proof}Since $\J(0,0) = \mathcal K_1$ and $\J(2,1) = \mK_2$, ${\rm Nim}(\J(0,0)) = 1 $ and ${\rm Nim}(\J(2,1)) = 2$.
Assume now that $k \geq 2$. Set $\varphi$ the involution on the set of vertices of $\J(2k,k)$ defined as $\varphi(u) := \{1,\ldots,2k\} \setminus \{u\}$. We observe that $\{u, \varphi(u)\}$ is never an edge and that $\varphi$ has no fixed points. Thus, by Lemma \ref{puntosfijos}, we conclude that ${\rm Nim}(\J(2k,k)) = {\rm Nim}(\emptyset) = 0$.
\end{proof}

The proofs of propositions \ref{nkpares} and \ref{n2k} consist of applying Lemma \ref{puntosfijos} to get a graph without edges. Thus, the Nim-value only depends on
the parity of the number of vertices of the resulting graph, which coincides with the parity of the number of vertices of the original Johnson graph. If we consider the clique complex of the Johnson graph or its $(s-1)$-skeleton (see Subsection \ref{cliquecomplex}), and use Lemma \ref{puntosfijosclique} in an analogous way, we get the following result.

\begin{corollary}\label{cliqueJohnson}Let $(n,k) \in \N^2$ with $0 \leq k \leq n$. If $k$ and $n$ are even or $n = 2k$, then ${\rm Nim}(\mC_s(\J(n,k))) = {\rm Nim}(\J(n,k))$ for all $s \geq 1$.
\end{corollary}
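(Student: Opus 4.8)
The plan is to mimic the proofs of Propositions \ref{nkpares} and \ref{n2k} verbatim, but invoke Lemma \ref{puntosfijosclique} in place of Lemma \ref{puntosfijos} at each step. Concretely, in both cases the relevant involution $\varphi_\pi$ (resp.\ the complementation map $\varphi(u) = [1,2k]\setminus u$) is an automorphism of the graph $\J(n,k)$ satisfying hypotheses (a) and (b) of Lemma \ref{puntosfijosclique}: it is an involution, and $\{u,\varphi(u)\}$ is never an edge because in the first case $|u\cap\varphi_\pi(u)|$ is even while $k-1$ is odd, and in the second case $u\cap\varphi(u)=\emptyset$ while adjacency requires intersection of size $k-1\geq 1$. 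These are exactly the hypotheses already verified in the original proofs, so no new work is needed there.

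First I would treat the case $n=2k$ with $k\geq 2$. Here the complementation involution has no fixed points, so the induced subgraph $H$ on the fixed points is the empty graph, whence $\mC_s(H)$ is the empty simplicial complex for every $s\geq 1$, and Lemma \ref{puntosfijosclique} gives ${\rm Nim}(\mC_s(\J(2k,k))) = {\rm Nim}(\mC_s(\emptyset)) = 0 = {\rm Nim}(\J(2k,k))$ by Proposition \ref{n2k}. (The degenerate subcases $\J(0,0)=\mK_1$ and $\J(2,1)=\mK_2$ must be handled separately: for $\mK_1$ every $\mC_s$ is a single point with Nim-value $1$, and for $\mK_2$ every $\mC_s$ with $s\geq 2$ is $\mC(\mK_2)=\mK_2$ with Nim-value $2$, matching ${\rm Nim}(\J(0,0))=1$ and ${\rm Nim}(\J(2,1))=2$; strictly the statement only claims $s\geq 1$, and $\mC_1$ of any graph is just its vertex set, so for $\J(0,0)$ and $\J(2,1)$ one checks $\mC_1$ has Nim-value equal to the number of vertices mod $2$, which is $1$ and $0$ respectively — so one should restrict attention to $s\geq 2$ or note the $\J(2,1)$, $s=1$ discrepancy; I would follow the paper's convention that the interesting range is $s\geq 2$.)

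Next I would treat the case $n,k$ both even. The involution $\varphi_\pi$ with $\pi=(1\,2)(3\,4)\cdots(n-1\,n)$ has fixed-point set consisting of those $k$-subsets that are unions of pairs $\{2i-1,2i\}$; the induced subgraph $H$ on these vertices has no edges, since any two such vertices intersect in an even number of elements, never in $k-1$. Hence $\mC_s(H) = \mC_s(H)$ is itself an edgeless complex (for $s\geq 1$ its faces are just the $\binom{n/2}{k/2}$ vertices), so ${\rm Nim}(\mC_s(H))$ equals the number of its vertices modulo $2$, i.e.\ $\binom{n/2}{k/2}\bmod 2$, which has the same parity as $\binom{n}{k}$ because $n,k$ are even. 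By Lemma \ref{puntosfijosclique}, ${\rm Nim}(\mC_s(\J(n,k))) = \binom{n}{k}\bmod 2 = {\rm Nim}(\J(n,k))$ by Proposition \ref{nkpares}.

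The argument has essentially no hard part: the only mild subtlety is checking that the hypotheses of Lemma \ref{puntosfijosclique} are identical to those of Lemma \ref{puntosfijos} (which they are, verbatim), and that when the fixed-point subgraph $H$ is edgeless the clique complex $\mC_s(H)$ degenerates to the vertex set for every $s\geq 1$, so that its Nim-value is independent of $s$ and equals $|V(H)|\bmod 2$. Once these two observations are in place, the result follows by quoting Propositions \ref{nkpares} and \ref{n2k}. I would therefore write the proof as: "Apply Lemma \ref{puntosfijosclique} with the same involutions used in the proofs of Propositions \ref{nkpares} and \ref{n2k}; in each case the subgraph induced on the fixed points has no edges, so its clique complex coincides with its vertex set and ${\rm Nim}(\mC_s(\J(n,k)))$ equals the parity of the number of vertices of the Johnson graph, which is the claimed value."
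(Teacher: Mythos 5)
Your proposal is essentially the paper's own argument: the paper justifies Corollary \ref{cliqueJohnson} precisely by observing that the proofs of Propositions \ref{nkpares} and \ref{n2k} reduce, via the same two involutions, to an edgeless (or empty) fixed-point subgraph $H$, so that replacing Lemma \ref{puntosfijos} by Lemma \ref{puntosfijosclique} gives ${\rm Nim}(\mC_s(\J(n,k))) = |V(H)| \bmod 2 = {\rm Nim}(\J(n,k))$, independently of $s$. Your additional check of the degenerate cases is a genuine (and correct) refinement: for $\J(2,1) = \mK_2$ and $s=1$ one indeed gets ${\rm Nim}(\mC_1(\mK_2)) = 0 \neq 2$, so the statement's ``for all $s \geq 1$'' should be read with the paper's convention that the relevant range is $s \geq 2$ (or with that single case excepted).
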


In the proofs of Proposition \ref{nkpares} and Proposition \ref{n2k}, we have considered the following automorphisms of Johnson graph's:
\begin{enumerate} \item the relabeling map $\varphi_{\pi}(\{b_1,\ldots,b_k\}) = \{\pi(b_1),\ldots,\pi(b_k)\}$, where $\pi \in \Sigma_n$ is a permutation, and
\item  when $n = 2k$, the complementation map $\sigma(u) = [1,n] \setminus u$.
\end{enumerate}
Indeed, according to \cite{BCN} (see also \cite[Theorem 2]{Jones}), the group of automorphisms of $\J(n,k)$ is exactly $\{\varphi_{\pi} \, \vert \, \pi \in \Sigma_n\} \simeq \Sigma_n$ when $n \neq 2k$, and$\{\varphi_{\pi} \, \vert \, \pi \in \Sigma_n\} \cup \{\varphi_{\pi} \circ \sigma \, \vert \, \pi \in \Sigma_n\}$ when $n = 2k$.

\begin{proposition}\label{noinvJohnson} If $n \neq 2k$ and either $n$ or $k$ is odd. Then, the only involution of $\J(n,k)$ satisfying the hypotheses of Lemma \ref{puntosfijos} is the identity.
\end{proposition}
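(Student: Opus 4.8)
The plan is to use the description of $\mathrm{Aut}(\J(n,k))$ recalled just above: when $n \neq 2k$, every automorphism is of the form $\varphi_\pi$ for a unique $\pi \in \Sigma_n$. So I would fix an involution $\varphi_\pi$ satisfying conditions (a) and (b) of Lemma~\ref{puntosfijos} and show that $\pi = \mathrm{id}$. Condition (a) forces $\pi$ to be a permutation of order dividing $2$, so $\pi$ is a product of $t$ disjoint transpositions for some $t \geq 0$, and the goal is to prove $t = 0$. The whole argument will come down to analysing condition (b): $\{u, \varphi_\pi(u)\} \notin E(\J(n,k))$ for all $k$-subsets $u$, which means $|u \cap \varphi_\pi(u)| \neq k-1$ for every $u$, i.e. no $k$-set $u$ is moved by $\pi$ to a set sharing exactly $k-1$ elements with it.

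First I would observe that if $t \geq 1$, say $\pi$ contains the transposition $(a\ b)$, then I want to build a $k$-set $u$ with $a \in u$, $b \notin u$, and $u \setminus \{a\}$ fixed pointwise by $\pi$; then $\varphi_\pi(u) = (u \setminus \{a\}) \cup \{b\}$ shares exactly $k-1$ elements with $u$, contradicting (b). The existence of such a $u$ is where the hypotheses "$n \neq 2k$" and "$n$ or $k$ odd" must enter: I need $k-1$ elements outside the support $\{a,b\}$ of this transposition that are fixed by $\pi$ (to fill out $u \setminus \{a\}$), which requires $n - 2t \geq k - 1$; and I also need the complementary configuration to exist, i.e. there should be room to place $u$ so that $b \notin u$ while still having $|u| = k$. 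So the argument splits according to whether $t$ is large or small. When $t$ is small enough that $n - 2t \geq k-1$ and simultaneously $2t \leq n - k + 1$ (equivalently the "fixed part" is big enough on both sides), the construction above goes through directly.

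The remaining case is when $t$ is large, i.e. the support of $\pi$ is almost everything. Here I would instead look at a $k$-set $u$ that is a union of some of the $2$-element blocks of $\pi$ together with possibly one fixed point: such $u$ satisfies $\varphi_\pi(u) = u$, so (b) is automatic, but that does not help. Instead, to derive a contradiction when $t$ is large I would choose $u$ to contain exactly one element $a$ from one block $(a\ b)$ and to be a union of whole blocks otherwise; then $\varphi_\pi(u)$ and $u$ differ exactly in the swap $a \leftrightarrow b$, giving intersection $k-1$ — this needs $k-1$ to be expressible as twice the number of whole blocks used, i.e. $k$ odd lets me do this cleanly, and when $k$ is even I use one fixed point plus whole blocks, which needs a fixed point to exist, i.e. $t < n/2$ — but $t = n/2$ together with $n \neq 2k$ and the parity hypothesis is exactly the excluded configuration, so this is consistent. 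I expect the bookkeeping of these sub-cases to be the main obstacle: one has to check that for every pair $(n,k)$ with $n \neq 2k$ and $n$ or $k$ odd, and every $t \geq 1$ with $2t \leq n$, at least one of these constructions produces a $k$-set witnessing $|u \cap \varphi_\pi(u)| = k-1$. The parity condition is what rules out the only genuinely symmetric obstruction ($n = 2k$ handled separately, and $n,k$ both even handled by Proposition~\ref{nkpares}), so the case analysis should close, but verifying it exhausts all $(n,k,t)$ is the delicate part.
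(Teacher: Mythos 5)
Your overall strategy coincides with the paper's: use the classification of ${\rm Aut}(\J(n,k))$ for $n\neq 2k$ to reduce to $\varphi_\pi$ with $\pi$ a product of $t$ disjoint transpositions, and, assuming $t\geq 1$, exhibit a vertex $u$ with $|u\cap\varphi_\pi(u)|=k-1$, contradicting hypothesis (b) of Lemma~\ref{puntosfijos}. However, the case analysis you propose does not close, and this is a genuine gap rather than mere bookkeeping. Your two constructions are (i) $u=\{a\}$ plus $k-1$ points fixed by $\pi$, which needs $n-2t\geq k-1$, and (ii) $u=\{a\}$ plus whole transposition blocks plus at most one fixed point, which needs roughly $t-1\geq\lfloor (k-1)/2\rfloor$. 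Intermediate values of $t$ escape both: for instance $n=9$, $k=7$, $\pi=(1,2)(3,4)$ (so $t=2$) satisfies the hypotheses of the proposition, yet $\pi$ has only $5<k-1$ fixed points and only $t-1=1<3$ spare blocks, so neither construction yields a witness, even though one exists (e.g.\ $u=\{1,3,4,5,6,7,8\}$, with $\varphi_\pi(u)=\{2,3,4,5,6,7,8\}$).

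The repair is to let $u$ consist of one element $a$ of a transposition $(a\,b)$ together with an \emph{arbitrary} mixture of whole $\pi$-blocks and fixed points totalling $k-1$ elements, i.e.\ to write $k-1=2x+y$ with $0\leq x\leq t-1$ and $0\leq y\leq n-2t$; a short parity check shows this is possible whenever $n\geq k+1$ under the hypotheses (if $n$ is even then $k$ is odd, so $k-1$ can be covered by whole blocks even when $\pi$ has no fixed point; if $n$ and $k$ are both odd then $n\geq k+2$ and one fixed point absorbs the parity). The paper avoids any case distinction on $t$ altogether: after relabelling so that $\pi=(1,2)(3,4)\cdots(2t-1,2t)$, it takes $u=\{1\}\cup\{3,\ldots,k+1\}$ when $k$ is odd, and $u=\{1\}\cup\{n-k+2,\ldots,n\}$ when $k$ is even and $n$ is odd. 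In both cases the $(k-1)$-element part is a union of blocks $\{2i-1,2i\}$ and of points fixed by $\pi$ (for the second choice, $n$ odd guarantees that $n$ lies outside the support of $\pi$ and that the interval starts at an odd number), hence it is preserved setwise, while $1\mapsto 2\notin u$; thus $|u\cap\varphi_\pi(u)|=k-1$ for every $t\geq 1$ at once. Either fix completes your argument; as it stands, the exhaustiveness you flagged as "delicate" is exactly what is missing.
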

\begin{proof} When $n \neq 2k$, the automorphisms that are also involutions are $\{\varphi_{\pi}\, \vert \, \pi$ is a permutation of order $2\}$. If $\varphi_{\pi}$ is not the identity, we may assume without loss of generality that $\pi = (1, \ 2) \cdots (2t-1,\ 2t)$ for some $t \geq 1$. For $k$ odd, we consider the vertex $u := \{1,3,\ldots,k+1\}$ and observe that $\{u, \varphi_{\pi}(u)\}$ is an edge because $\varphi_{\pi}(u) = \{2,3,\ldots,k+1\}$; hence $\varphi_{\pi}$ does not satisfy the hypotheses of Lemma \ref{puntosfijos}.
For $k$ even and $n$ odd, we consider the vertex $u := \{1,n-k+1,\ldots,n\}$ and observe that $\{u, \varphi_{\pi}(u)\}$ is an edge because $\varphi_{\pi}(u) = \{2,n-k-1,\ldots,n\}$; again here we conclude that $\varphi_{\pi}$ does not satisfy the hypotheses of Lemma \ref{puntosfijos}.
\end{proof}

\section{Chomp in some subfamilies of threshold graphs}\label{sec:threshold}
In this section we study the chomp game in some subfamilies of  threshold graphs. Given nonnegative integers $n,k$ and $i_j\le n$, with $j=1,\ldots,k$, we define the graph $\mK_n^{i_1,\ldots,i_k}$ as the graph containing a
 clique $\mK_n$ with vertices $\{u_{1},\ldots,u_{n}\}$ and $k$ new vertices, $\{v_{i_1},\ldots,v_{i_k}\}$, such that $v_{i_j}$ is adjacent to $\{u_{1},\ldots,u_{i_j}\}$  for every $1\le j\le k$. This family of graphs is known as the family of \emph{threshold graphs}.

We prove which player has a winning strategy for chomp on  $\mK_n^{i}$ for $n\ge i\ge0$ (see Table $(b)$) and for chomp on $\mK_n^{j,i}$ for some values on $i$ and $j$ (see Tables $(c),(d)$ and $(e)$).
\begin{figure}[htb]
\begin{center}
 \includegraphics[width=.8\textwidth]{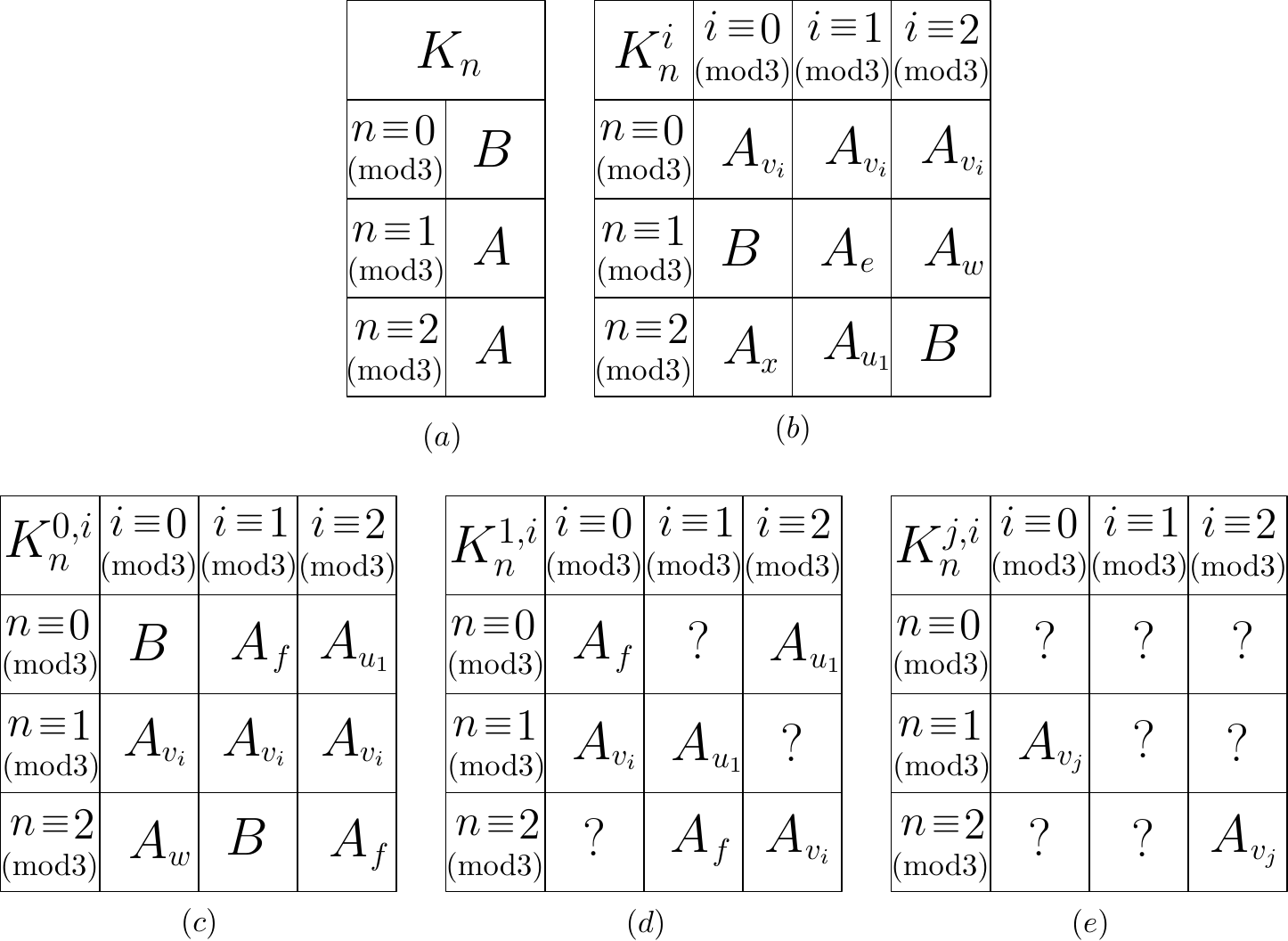}
 \caption{The subscript in each letter $A$ indicates which element is the winning movement for player $A$ where $e=u_1v_i$, $f=u_1v_1$, $x$ is a vertex of $\mK_n$ different from $u_1,\ldots,u_i$ and $w$ is an edge of $\mK_n$ not incident to the vertices $u_1,\ldots,u_i$. The symbol ``?'' denotes that the problem is open in this case. }\label{Tabla}
\end{center}
\end{figure}
In order to prove these results, we use the following lemma that proves which player has a winning strategy for chomp in complete graphs and is a direct consequence of 
Theorem \ref{compmultipartito}.
\begin{lemma}[{\cite[Lemma 1]{KY}}]\label{completas}
   Player $B$ has a winning strategy for chomp on $\mK_n$ when $n\equiv 0$ $({\rm mod}\  3)$, and loses otherwise (see Table $(a)$).
\end{lemma}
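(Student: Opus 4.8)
The plan is to realize $\mK_n$ as a complete multipartite graph and quote Theorem \ref{compmultipartito}. Observe that $\mK_n$ is precisely $\mK_{n_1,\ldots,n_n}$ with every $n_i=1$, i.e.\ the complete $n$-partite graph all of whose parts are singletons. In the notation of Theorem \ref{compmultipartito} we then have $t:=|\{i\mid n_i\text{ odd}\}|=n$, since every part has odd size $1$. Hence ${\rm Nim}(\mK_n)=(n\ {\rm mod}\ 3)$.

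From this the claim is immediate: recall from the discussion following Theorem \ref{th:nimsum} that $\ch(P)=B$ if and only if ${\rm Nim}(P)=0$. Therefore $\ch(\mK_n)=B$ exactly when $n\equiv 0\pmod 3$, and $\ch(\mK_n)=A$ otherwise, which is the assertion of the lemma (and matches Table $(a)$). In the case $n\equiv 0\pmod 3$ this already exhibits, non-constructively, that the second player wins; for the reader who wants the explicit strategy it suffices to note that, since ${\rm Nim}(\mK_n)\neq 0$ when $3\nmid n$, player $A$'s winning move is to pass to a position of Nim-value $0$, namely to delete a vertex when $n\equiv 1\pmod 3$ (leaving $\mK_{n-1}$ with $n-1\equiv 0$) and to delete an edge when $n\equiv 2\pmod 3$ (leaving $\mK_{n}$ minus one edge, which is the complete multipartite graph $\mK_{2,1,\ldots,1}$ on $n-2$ singleton parts plus one part of size $2$, again with $t=n-2\equiv 0\pmod 3$).

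There is essentially no obstacle here: the only thing to check is the bookkeeping that removing a single edge from $\mK_n$ yields a complete multipartite graph with $t\equiv n-2\pmod 3$, which is routine, so the lemma follows directly from Theorem \ref{compmultipartito}.
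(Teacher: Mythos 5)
Your proof is correct and follows essentially the same route as the paper, which presents Lemma \ref{completas} precisely as a direct consequence of Theorem \ref{compmultipartito} by viewing $\mK_n$ as the complete multipartite graph with $n$ singleton parts (so $t=n$ and ${\rm Nim}(\mK_n)=n\ {\rm mod}\ 3$). Your added explicit winning moves for player $A$ (a vertex when $n\equiv 1$, an edge when $n\equiv 2$ modulo $3$) are a harmless and correct supplement.
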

We first prove which player has a winning strategy for chomp on $\mK_n^{i}$ for every $i\ge0$ when $n=i,i+1,i+2$.
\begin{lemma}\label{nfijo}
 Let $i\ge0$ and $n=i,i+1,i+2$. Then player $B$ has a winning strategy for chomp on $\mK_i^n$ when $i, n \equiv 2$ $({\rm mod}\  3)$, when $i, n-1 \equiv 0$ $({\rm mod}\  3)$ and loses otherwise.
\end{lemma}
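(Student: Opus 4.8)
The statement concerns the graphs $\mK_n^i$ for $n \in \{i, i+1, i+2\}$, i.e., a clique $\mK_n$ together with one extra vertex $v_i$ adjacent to $u_1,\ldots,u_i$. I would organize the proof around the key observation that $v_i$ has exactly $n-i \in \{0,1,2\}$ non-neighbors among $\{u_1,\ldots,u_n\}$, which makes a pairing/involution argument feasible, together with a direct case analysis on the very first move. The natural first-player candidate moves to test are: (i) deleting a vertex $u_j$ with $j>i$ (a non-neighbor of $v_i$), which leaves $\mK_{n-1}^i$ or an even-smaller threshold graph; (ii) deleting an edge inside $\mK_n$ not incident to $\{u_1,\ldots,u_i,v_i\}$; (iii) deleting the edge $e = u_1 v_i$; (iv) deleting $v_i$ itself, which leaves $\mK_n$; (v) deleting a vertex $u_j$ with $j \le i$. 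The plan is to show that from $\mK_i^n$ (the hardest subcase, $n=i$, where $v_i$ is adjacent to the whole clique) the position reduces cleanly, and then bootstrap to $n=i+1$ and $n=i+2$.

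The main structural tool I would use is Lemma \ref{puntosfijos} (or a bare-hands pairing argument in its spirit): when $n-i$ is even (so $n=i$ or $n=i+2$), one can try to pair up moves. For $n=i$, the graph $\mK_i^i$ is $\mK_i$ with an apex vertex joined to everything, i.e. it is $\mK_{i+1}$ minus nothing — wait, $v_i$ adjacent to all of $u_1,\dots,u_i$ means $\mK_i^i = \mK_{i+1}$. So that subcase is just Lemma \ref{completas}: $B$ wins iff $i+1 \equiv 0 \pmod 3$, i.e. $i \equiv 2 \pmod 3$; and since $n=i$, the condition "$i,n\equiv 2 \pmod 3$" is exactly this. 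Good — this checks the first stated case. For $n=i+1$: here $v_i$ misses exactly one clique vertex, say $u_n = u_{i+1}$. I would analyze first moves directly. Deleting $v_i$ leaves $\mK_{i+1}$; deleting $u_{i+1}$ leaves $\mK_{i+1}$ again (the remaining $u_1,\dots,u_i$ plus $v_i$ adjacent to all of them). Deleting any $u_j$ with $j\le i$ leaves $\mK_i^{i-1}$-type graph (a clique $\mK_{i-1}$ on the remaining $u$'s, plus $u_{i+1}$ adjacent to them, plus $v_i$ adjacent to the $\mK_{i-1}$ part — this is $\mK_i^{i-1}$, covered at $n=i$ level... actually $\mK_{i-1}$ plus two apexes each missing, so it's a complete multipartite-ish thing). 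Deleting an edge of the clique or the edge $e$ are the remaining cases. I would compute the Nim-value (or just the $\ch$-value) of each resulting position using Theorem \ref{compmultipartito}, Lemma \ref{completas}, and the already-established $n=i$ result, then take the mex / check whether some reply is a $B$-win.

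The cleanest route for $n=i+1$ and $n=i+2$ is probably: first prove $n=i$ (done above via $\mK_i^i = \mK_{i+1}$), then for $n=i+1$ note that $\mK_{i+1}^i$ is obtained from $\mK_{i+2} = \mK_{i+1}^{i+1}$ by deleting the edge $u_{i+1}v_i$; use that one of the players can always mirror, or just list the $\le 5$ move-types and their outcomes. For $n=i+2$, $v_i$ misses two clique vertices $u_{i+1}, u_{i+2}$; I would try the involution swapping $u_{i+1} \leftrightarrow u_{i+2}$ — but $\{u_{i+1},u_{i+2}\}$ is an edge of $\mK_n$, so Lemma \ref{puntosfijos} does not apply directly. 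Instead I expect to fall back to a first-move case analysis: deleting $v_i$ gives $\mK_{i+2}$; deleting $u_{i+1}$ or $u_{i+2}$ gives $\mK_{i+1}^i$ (the $n=i+1$ case just proved); deleting $u_j$, $j\le i$, gives a smaller threshold graph with two apexes; deleting $e$ or a clique edge gives further reducible positions. Collecting these and applying mex gives the claimed dichotomy ("$i,n-1\equiv 0 \pmod 3$" for the $n=i+2$ row, since then $i\equiv 0$, $n\equiv 2$).

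**Expected main obstacle.** The bookkeeping in the first-move case analysis for $n=i+1$ and $n=i+2$ is the delicate part: deleting an interior clique vertex $u_j$ (with $j\le i$) or the edge $e=u_1v_i$ produces graphs that are no longer of the form $\mK_m^i$ but rather threshold graphs with the apex $v_i$ now adjacent to a *proper* subset, and one must recognize these as complete multipartite graphs (so Theorem \ref{compmultipartito} applies) or as previously-handled cases. In particular after deleting $e=u_1v_i$, the vertex $u_1$ is still adjacent to everything but $v_i$ is only adjacent to $u_2,\dots,u_i$, giving a graph that should be a complete multipartite graph $\mK_{1,\dots,1,2}$-type object once one checks the non-adjacency classes — verifying this identification, and then getting all the residue-class arithmetic mod $3$ to line up with the two stated winning conditions, is where the real care is needed. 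I do not anticipate needing any tool beyond Lemma \ref{puntosfijos}, Lemma \ref{completas}, Theorem \ref{compmultipartito}, and Theorem \ref{th:nimsum}, but the argument will be a somewhat tedious exhaustive check rather than a slick one-line reduction.
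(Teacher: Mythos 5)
Your reduction for $n=i$ (namely $\mK_i^i=\mK_{i+1}$ plus Lemma \ref{completas}) is exactly the paper's first step, but the rest of the plan has a genuine gap. The paper settles the whole $n=i+1$ row in one stroke by applying Lemma \ref{puntosfijos} to the swap of $v_i$ and $u_{i+1}$: these two vertices are non-adjacent and have identical neighborhoods, so the swap is an involution satisfying (a) and (b), its fixed-point graph is $\mK_i$, hence ${\rm Nim}(\mK_{i+1}^i)={\rm Nim}(\mK_i)$ and Lemma \ref{completas} finishes, $B$-win cases included. You mention mirroring only as one of two options and lean instead on a first-move case analysis; but proving the $B$-win at $n=i+1$, $i\equiv 0 \pmod 3$ requires refuting \emph{every} move of $A$, and the tool you earmark for the hard ones fails: after deleting $e=u_1v_i$ from $\mK_{i+1}^i$ the non-edges $\{u_1,v_i\}$ and $\{v_i,u_{i+1}\}$ share $v_i$ while $u_1u_{i+1}$ is still an edge, so non-adjacency is not transitive, the graph is \emph{not} complete multipartite, and Theorem \ref{compmultipartito} does not apply. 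As written, the $B$-win cases are therefore not established; committing to the twin involution (as the paper does) removes this difficulty entirely.

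For the $n=i+2$ row you have also misread the statement: the condition ``$i,\,n-1\equiv 0 \pmod 3$'' forces $n\equiv 1 \pmod 3$, which is impossible when $n=i+2$, so in that row $A$ wins for \emph{every} residue of $i$; your parenthetical ``$i\equiv 0$, $n\equiv 2$'' would be a $B$-win that is in fact false ($A$ wins there by deleting $u_{i+1}$, leaving $\mK_{i+1}^i$, Nim-equivalent to $\mK_i$ with $i\equiv 0$). Moreover, for $i\equiv 2 \pmod 3$ you never produce $A$'s winning move: deleting $v_i$ leaves $\mK_{i+2}$ with $i+2\equiv 1$, and deleting $u_{i+1}$ leaves a position equivalent to $\mK_i$ with $i\equiv 2$, both wins for the opponent. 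The paper's move is to delete the clique edge $u_{i+1}u_{i+2}$; after that deletion $u_{i+1}$ and $u_{i+2}$ become non-adjacent twins, so the very involution you considered and discarded becomes admissible via Lemma \ref{puntosfijos}, and the position is Nim-equivalent to $\mK_{i+1}$ with $i+1\equiv 0 \pmod 3$. Without this two-step idea (delete the edge, then mirror) your case analysis cannot close the $i\equiv 2$ case.
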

\begin{proof} If $n=i$ then $\mK_i^i=\mK_{i+1}$. If $n=i+1$, we notice that chomp in $\mK_{i+1}^i$ is equivalent to chomp in $\mK_i$ since  there exists an involution on vertices $v_i$ and the vertex of $\mK_n$ not adjacent to $v_i$. Then, these cases can be solved by Lemma \ref{completas}. If $n=i+2$, we have the following cases:

\textbf{\emph{Case $i\equiv 0$ $({\rm mod}\  3)$.}} Player $A$ wins by removing  a vertex $x$ different from $u_1,\dots,u_i$ since player $A$ gives to  player $B$ the graph $\mK_{i+1}^i$. As chomp in $\mK_{i+1}^i$ is equivalent to chomp in $\mK_i$, player $A$ wins by Lemma \ref{completas}.

\textbf{\emph{Case $i\equiv 1$ $({\rm mod}\  3)$.}} Player $A$ wins by removing vertex $v_{i}$ since player $A$ gives to  player $B$ the graph $\mK_{i+2}$. Then player $A$ wins by Lemma \ref{completas}.

\textbf{\emph{Case $i\equiv 2$ $({\rm mod}\  3)$.}} Player $A$ wins by removing an edge $w=xy$ of $\mK_n$ not incident to vertices $u_1,\ldots,u_i$. As there exists an involution on vertices $x$ and $y$  in $\mK_{i+2}^i-w$, player $A$ gives to  player $B$ the graph $\mK_{i+1}$. Then player $A$ wins by Lemma \ref{completas}.
\end{proof}
Next, we prove which player has a winning strategy for chomp on $\mK_n^{i}$ for every $n\ge i\ge0$ and $i=0,1,2$.
\begin{lemma}\label{ifijo}
Let  $n\ge i\ge0$ and $i=0,1,2$. Then player $B$ has a winning strategy for chomp on  $\mK_n^0$ when $n\equiv 1$ $({\rm mod}\  3)$, for chomp on $\mK_n^2$ when $n\equiv 2$ $({\rm mod}\  3)$ and loses otherwise.
\end{lemma}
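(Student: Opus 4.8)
The plan is to handle the three graphs $\mK_n^0$, $\mK_n^1$, $\mK_n^2$ in turn, reducing each to complete graphs or complete multipartite graphs via Theorem~\ref{th:nimsum}, Theorem~\ref{compmultipartito} and Lemma~\ref{completas}, and using the involution Lemma~\ref{puntosfijos} to evaluate the positions obtained by deleting an edge. For $\mK_n^0$ this is immediate: $\mK_n^0$ is the disjoint union (over the empty face) of $\mK_n$ and an isolated vertex, so by Theorem~\ref{th:nimsum} and Theorem~\ref{compmultipartito}, ${\rm Nim}(\mK_n^0)=(n\bmod 3)\oplus 1$, which equals $0$ precisely when $n\equiv 1\pmod 3$. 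This settles the case $i=0$.

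For $i=1$, the graph $\mK_n^1$ is $\mK_n$ with a pendant vertex $v_1$ attached to $u_1$, and I claim player $A$ always wins. It suffices to exhibit, in each residue class of $n$ modulo $3$, a first move to a position of Nim-value $0$: remove $v_1$ if $n\equiv 0$ (leaving $\mK_n$); remove the edge $u_1v_1$ if $n\equiv 1$ (leaving $\mK_n^0$, of Nim-value $1\oplus 1=0$); and remove $u_1$ if $n\equiv 2$ (leaving $\mK_{n-1}$ together with the now isolated $v_1$, i.e.\ $\mK_{n-1}^0$, of Nim-value $1\oplus 1=0$). Since $\mK_n^1$ never appears in the list of $B$-wins, this is exactly what the statement asserts.

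For $i=2$, I will prove by strong induction on $n\ge 2$ that ${\rm Nim}(\mK_n^2)=0$ if and only if $n\equiv 2\pmod 3$; the base case is $\mK_2^2=\mK_3$. Two auxiliary facts, each an instance of Lemma~\ref{puntosfijos} applied to the involution that swaps the two named endpoints of the deleted edge (the resulting non-edge makes hypothesis (b) hold), will be used throughout: deleting $u_1u_2$ from $\mK_n^2$ yields a graph of Nim-value ${\rm Nim}(\mK_{n-2}^0)=((n-2)\bmod 3)\oplus 1$, and deleting $u_ju_k$ with $j,k\ge 3$ yields a graph of Nim-value ${\rm Nim}(\mK_{n-2}^2)$. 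If $n\equiv 0$, player $A$ wins by deleting $v_2$ (leaving $\mK_n$, of Nim-value $0$); if $n\equiv 1$ (so $n\ge 4$), player $A$ wins by deleting some $u_ju_k$ with $j,k\ge 3$, which by the induction hypothesis lands on Nim-value ${\rm Nim}(\mK_{n-2}^2)=0$. If $n\equiv 2$ (so in the inductive step $n\ge 5$), I must check that every move of $A$ reaches a position of nonzero Nim-value: deleting $v_2$ gives $\mK_n$ (Nim-value $2$); deleting $u_1$ or $u_2$ gives $\mK_{n-1}^1$ (nonzero by the previous paragraph); deleting $u_j$ with $j\ge 3$ gives $\mK_{n-1}^2$ (nonzero by induction, since $n-1\equiv 1$); deleting $v_2u_1$ or $v_2u_2$ gives $\mK_n^1$ (nonzero); deleting $u_1u_2$ gives Nim-value $0\oplus 1=1$; and deleting $u_ju_k$ with $j,k\ge 3$ gives ${\rm Nim}(\mK_{n-2}^2)\ne 0$ by induction ($n-2\equiv 0$).

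The remaining, and least automatic, family of moves is the deletion of an edge $u_1u_j$ (or, symmetrically, $u_2u_j$) with $j\ge 3$: for $n\ge 5$ the graph $\mK_n^2\setminus\{u_1u_j\}$ has no nontrivial automorphism, so Lemma~\ref{puntosfijos} gives nothing. The key observation is that from $\mK_n^2\setminus\{u_1u_j\}$ the player to move can delete $v_2$, reaching $\mK_n$ minus a single edge, which is the complete multipartite graph with one part $\{u_1,u_j\}$ and $n-2$ singleton parts; by Theorem~\ref{compmultipartito} its Nim-value is $(n-2)\bmod 3=0$. Hence ${\rm Nim}(\mK_n^2\setminus\{u_1u_j\})\ne 0$, completing the verification that ${\rm Nim}(\mK_n^2)=0$ whenever $n\equiv 2\pmod 3$. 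Reading ``${\rm Nim}=0$'' as ``$B$ has a winning strategy'' yields the statement, while the explicit first moves exhibited above give $A$'s winning strategies in the other cases. The main obstacle is precisely this last family of moves: unlike all the others it does not reduce to a smaller graph through an involution, and one has to find the auxiliary move (deleting $v_2$) that exposes a complete multipartite graph.
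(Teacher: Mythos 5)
Your proposal is correct, and on the crucial case it converges to exactly the same key idea as the paper: for $\mK_n^2$ with $n\equiv 2 \pmod 3$, the only move that does not reduce via an involution or a previously settled position is the deletion of an edge $u_1u_j$ or $u_2u_j$ with $j\ge 3$, and both you and the paper answer it by deleting $v_2$; the paper then finishes with the involution swapping the two endpoints (reducing to $\mK_{n-2}$), while you evaluate $\mK_n$ minus an edge directly as the complete multipartite graph $\mK_{2,1,\ldots,1}$ via Theorem~\ref{compmultipartito} -- the same conclusion by a marginally different bookkeeping. Where you genuinely diverge is in the treatment of $i=0$ and $i=1$: the paper runs an induction on $n$ checking all first moves against its tables, whereas you dispatch $\mK_n^0$ in one line as a disjoint union via the Sprague--Grundy theorem (Theorem~\ref{th:nimsum}), getting ${\rm Nim}(\mK_n^0)=(n\bmod 3)\oplus 1$, and then read off explicit winning first moves for $\mK_n^1$ from that formula. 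This buys you actual Nim-values (not just outcomes) for $i=0$ and a shorter, induction-free argument for $i=0,1$, at essentially no cost; your induction for $i=2$ is complete (all vertex and edge moves are enumerated, the base case $\mK_2^2=\mK_3$ is right, and the ranges $n\ge 4$, $n\ge 5$ keep the inductive references legitimate), so the whole argument stands.
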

\begin{proof}
 For each $i=0,1,2$, we proceed by induction on $n$ where the base cases are $n=0,1,2$. For $n=0$, $\mK_0^{i}=\mK_1$ and player  $A$ wins. For $n=1$, player $B$ wins when $i=0$ since $\mK_1^{0}$ consists of two isolated vertices and  player $A$ wins otherwise by deleting the edge $u_1v_i$. For $n=2$, player $B$ wins when $i=2$ by Lemma \ref{completas} and player $A$ wins otherwise by deleting vertex $u_1$. Now suppose the lemma holds for $i=0,1,2$ and every positive integer $j\le n-1$. Let us consider the graph $\mK_n^i$ for $n\ge3$ and  $0\le i\le2$.
By induction hypothesis and by Lemma \ref{completas}, one can check that player $A$ wins in the cases when appears the letter $A$ in Table $(b)$  by removing the element that the subscript indicates. Next, we prove that player $B$ wins for the two remaining cases.

First suppose that $n\equiv 1$ $({\rm mod}\  3)$ and $i=0$. Then player $A$ loses by removing any element. If player $A$ removes vertex $v_i$, $A$ loses by Lemma \ref{completas}. If player $A$ removes any vertex of $\mK_n$, then $A$ loses by induction hypothesis. If player $A$ removes some edge $w=xy$, then  there exists an involution on vertices $x$ and $y$ in $\mK_n^0-w$, giving to player $B$ the graph $\mK_{n-2}^0$. As $n-2\equiv 2$ $({\rm mod}\  3)$ then player $B$ wins by induction hypothesis. Therefore, player $A$ loses  in any case.

Now suppose that $n\equiv 2$ $({\rm mod}\  3)$ and $i=2$. Then player $A$ loses by removing any element.
One can check that for all cases except one, player $A$ gives to  player $B$ a graph (or a graph who is equivalent in chomp to a graph) corresponding to a letter $A$ in Tables $(a)$ or $(b)$  with fewer vertices. Thus, the result holds by  induction hypothesis for almost all the cases. The special case is when player $A$ removes an edge $f=xu_2$ of $\mK_n$ with $x\not\in\{u_1,u_2\}$. In this case the graph $\mK_{n}^{2}-f$ does not have involutions and is not a graph of Tables $(a)$ or $(b)$. Nevertheless, player $B$ can remove vertex $v_2$. Hence, there exists an involution on vertices $x$ and $u_2$ in $\mK_n^2-f-v_2$, giving to  player $A$ the graph $\mK_{n-2}$. Hence, player  $A$ loses by Lemma \ref{completas}.
\end{proof}
We are now able to prove the next theorem.
\begin{theorem}\label{general}
Let $n\ge i\ge 0$. Then,
 \begin{itemize}
   \item [(i)]  player $B$ has a winning strategy for chomp on $\mK_n^i$ when $n\equiv 1$ $({\rm mod}\  3)$ and $i\equiv 0$ $({\rm mod}\  3)$, when $n\equiv 2$ $({\rm mod}\  3)$ and $i\equiv 2$ $({\rm mod}\  3)$, and  loses otherwise (see Table $(b)$).
   \item [(ii)] player $B$ has a winning strategy for chomp on $\mK_n^{0,i}$ when $n,i\equiv 0$ $({\rm mod}\  3)$, when $n\equiv 2$ $({\rm mod}\  3)$ and $i\equiv 1$ $({\rm mod}\  3)$, and  loses otherwise (see Table $(c)$).
   \item [(iii)] player $A$ has a winning strategy for chomp on $\mK_n^{1,i}$ when $n,i\equiv 0$ $({\rm mod}\  3)$, when $n\equiv 0$ $({\rm mod}\  3)$ and $i\equiv 2$ $({\rm mod}\  3)$, when $n\equiv 1$ $({\rm mod}\  3)$ and $i\equiv 0$ $({\rm mod}\  3)$, when $n,i\equiv 1$ $({\rm mod}\  3)$, when $n\equiv 2$ $({\rm mod}\  3)$ and $i\equiv 1$ $({\rm mod}\  3)$, and when $n,i\equiv 2$ $({\rm mod}\  3)$ (see Table $(d)$).
   \item [(iv)] player $A$ has a winning strategy for chomp on $\mK_n^{j,i}$ for every $j\ge0$ when $n\equiv 1$ $({\rm mod}\  3)$ and $i\equiv 0$ $({\rm mod}\  3)$, and when $n,i\equiv 2$ $({\rm mod}\  3)$  (see Table $(e)$).
 \end{itemize}
\end{theorem}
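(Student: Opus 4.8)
The plan is to prove (i)--(iii) together, by induction on $n$, determining $\ch$ of each graph by running through all first moves and reducing either to a complete graph (Lemma \ref{completas}) or to a threshold graph with fewer clique vertices; (iv) then follows from (i). The base cases are already available: Lemma \ref{completas} handles complete graphs, Lemma \ref{ifijo} handles $\mK_n^i$ for $i\le 2$, and Lemma \ref{nfijo} handles $\mK_n^i$ for $n\le i+2$, so in the inductive step for (i) one may assume $i\ge 3$ and $n\ge i+3$. The first moves in $\mK_n^i$ and their effects are: delete $v_i$, leaving $\mK_n$; delete a clique vertex $u_j$, leaving $\mK_{n-1}^i$ if $j>i$ and $\mK_{n-1}^{i-1}$ if $j\le i$; delete an edge incident to $v_i$, which after relabelling the clique is $\mK_n^{i-1}$; and delete a clique edge $u_ju_k$. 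In the last case the transposition $(u_j\ u_k)$ is an automorphism of $\mK_n^i-u_ju_k$, an involution, and $\{u_j,u_k\}$ is now a non-edge, so Lemma \ref{puntosfijos} applies and replaces the position by its subgraph of fixed points: $\mK_{n-2}^i$ if $i<j$, or $\mK_{n-2}^{i-2}$ if $k\le i$, both within reach of the induction hypothesis. Granting this, (i) reduces to a check, residue class by residue class of $(n\bmod 3,\,i\bmod 3)$: when the theorem predicts $\ch(\mK_n^i)=B$ one verifies that every listed move produces a graph with $\ch=A$ (by the induction hypothesis or Lemma \ref{completas}), and when it predicts $\ch(\mK_n^i)=A$ one exhibits a single move to a graph with $\ch=B$.

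The one move not covered by the symmetry argument is the deletion of a clique edge $u_ju_k$ straddling the neighbourhood of $v_i$, i.e.\ with $j\le i<k$; here $u_j$ and $u_k$ cannot be exchanged, and this is the main obstacle of the proof. I would argue one move deeper, exactly as in the ``special case'' of the proof of Lemma \ref{ifijo}: from $G:=\mK_n^i-u_ju_k$ the player on turn may delete $v_i$ -- after which $(u_j\ u_k)$ is an admissible involution of $\mK_n-u_ju_k$ identifying it with $\mK_{n-2}$ -- or delete the edge $u_jv_i$, after which $(u_j\ u_k)$ is admissible on the resulting graph, identifying it with $\mK_{n-2}^{i-1}$. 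Using Lemma \ref{completas} for the first option and (i) inductively for the second, one checks that in precisely the residue classes where $\mK_n^i$ is predicted to have $\ch=B$, one of these two replies wins, so $\ch(G)=A$ there; this is all that is needed to feed $G$ into the case analysis above, since in the classes where $\ch(\mK_n^i)=A$ one can always choose the winning first move to avoid $G$, and $\ch(G)$ is then irrelevant.

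Items (ii) and (iii) come from running the same kind of induction on $\mK_n^{0,i}$, and then on $\mK_n^{1,i}$, calling on (i) -- and, for (iii), also (ii) -- as needed. The first player now additionally has the option of deleting the second added vertex: deleting the isolated vertex $v_0$ leaves $\mK_n^i$; in $\mK_n^{1,i}$, deleting $v_1$ leaves $\mK_n^i$, deleting $v_i$ leaves $\mK_n^1$, and deleting the edge $u_1v_1$ leaves $\mK_n^{0,i}$. A couple of the remaining moves leave the threshold family -- for instance, deleting $u_1v_i$ in $\mK_n^{1,i}$ destroys the nested neighbourhoods of $v_1$ and $v_i$ -- and these positions need their own small involution or direct analysis. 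With (i)--(iii) available inductively, what remains is once more a finite check over the nine residue classes of $(n,i)$ modulo $3$; that for $\mK_n^{1,i}$ three of these classes admit no winning first move for $A$ is exactly why those table entries stay open.

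Finally, (iv) is immediate from (i): in $\mK_n^{j,i}$ player $A$ deletes $v_j$, leaving $\mK_n^i$, which by (i) has $\ch=B$ precisely when $n\equiv 1$ and $i\equiv 0$, or when $n\equiv 2\equiv i\pmod 3$; since $A$ has just moved, in those cases $A$ plays the role of the second player on $\mK_n^i$ and wins, no matter what $j$ is. I expect essentially all the work, and all the subtlety, to lie in the inductive case analysis and, within it, in handling the asymmetric position $\mK_n^i-u_ju_k$ with $j\le i<k$.
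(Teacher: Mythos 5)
Your proposal is correct and follows essentially the same route as the paper: induction anchored in Lemmas \ref{completas}, \ref{nfijo} and \ref{ifijo}, reduction of almost every first move to a smaller complete or threshold graph via the transposition involutions of Lemma \ref{puntosfijos}, the same two-move-deep treatment of the straddling clique edge $u_ju_k$ with $j\le i<k$ (the paper has $B$ reply with $v_iu_j$ to reach $\mK_{n-2}^{i-1}$ and waves at the symmetric class with ``similar arguments'', which your alternative reply deleting $v_i$ to reach $\mK_{n-2}$ makes explicit), the same sketch-level residue-class check for (ii) and (iii), and the identical move (delete $v_j$) for (iv). The only quibble is your remark that in the three open classes of Table $(d)$ player $A$ ``admits no winning first move'': what is true is only that no first move lands in a position already known to be a $B$-win, so those entries remain undecided rather than decided in $B$'s favour; this does not affect any of the claims actually asserted in the theorem.
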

\begin{proof} (i). We prove by induction on $n$ and $i$.  Lemma  \ref{nfijo} proves the cases $n=i,i+1,i+2$ and every $i\ge0$. Lemma \ref{ifijo} proves the cases $i=0,1,2$ and every $n\ge i$. Now suppose the theorem holds for  $\mK_n^{j}$ when $j\le i-1$ and $n\ge j$, and also for $\mK_m^{i}$ when  $m\le n-1$ and $i\ge0$. Let us consider the graph $\mK_n^i$ with $i\ge3$ and $n\ge i+3$.
By induction hypothesis and by Lemma \ref{completas}, one can check that player $A$ wins whenever we have a letter $A$ in Table $(b)$  by removing the element indicated by the subscript. Next, we prove that player $B$ wins for the two remaining cases.

When $n\equiv 1$ $({\rm mod}\  3)$ and $i\equiv 0$ $({\rm mod}\  3)$, player $A$ loses by removing any element.  One can check that for all cases except one, player $A$ gives to  player $B$ a graph (or a graph with the same chomp value) corresponding  to a letter $A$ in Tables $(a)$ or $(b)$ with fewer vertices. Hence, the result holds by  induction hypothesis. The special case is when player $A$ removes an edge $f=xu_j$,  for some $j\in\{1,\ldots,i\}$, where  $x\not\in\{u_1,\ldots,u_i,v_i\}$. In this case, the graph $\mK_{n}^{i}-f$ does not have involutions and is not a graph consider in Tables $(a)$ or $(b)$. Nevertheless, player $B$ can remove the edge $e=v_iu_j$. Then, there exists an involution on vertices $x$ and $u_j$ in $\mK_n^i-f-e$, giving to player $A$ the graph $\mK_{n-2}^{i-1}$.  As $n-2,i-1\equiv 2$ $({\rm mod}\  3)$, player  $A$ loses also in this case by induction hypothesis. Similar arguments occur  when $n,i\equiv 2$ $({\rm mod}\  3)$.

(ii) and (iii). Table $(c)$  shows which player wins for chomp on  $\mK_n^{0,i}$ for every $n\ge i\ge 0$ and Table $(d)$ shows, for some values of $n$ and $i$, the winning positions of player $A$ of chomp on  $\mK_n^{1,i}$.  We omit this proof since the ideas are similar to the proofs of Lemmas \ref{nfijo}, \ref{ifijo} and to (i). We notice that the subscript in each letter $A$  of Tables $(c)$ and $(d)$ indicates which element is a winning movement for player $A$ because the resulting graph corresponds to a position with letter $B$ in Table $(b)$  or $(c)$.

(iv). For every $j\ge0$, we notice that player $A$ wins when $n\equiv 1$ $({\rm mod}\  3)$ and $i\equiv 0$ $({\rm mod}\  3)$ by removing vertex $v_j$, since player $A$ gives to player $B$ a graph corresponding  to a letter $B$ in  Table $(b)$. The same occurs when $n,i\equiv 2$ $({\rm mod}\  3)$.
\end{proof}

We have not been able to complete all the cases in Tables $(d)$ and $(e)$.  For instance, we do not know the value of ${\rm Chomp}(\mK_n^{1,i})$ for all possible values 
$n \equiv 0 \, ({\rm mod}\ 3)$ and $i \equiv 1 \, ({\rm mod}\ 3)$.
Nevertheless, taking $i = 1$, we have that ${\rm Chomp}(\mK_n^{1,1}) = B$ when $n\equiv 0$ $({\rm mod}\  3)$ because $\ch(\mK_n^{1,1})=\ch(\mK_n)$.
We wonder if player $B$ has a winning strategy for chomp on $\mK_n^{1,i}$ when $n\equiv 0$ $({\rm mod}\  3)$ and $i\equiv 1$ $({\rm mod}\  3)$.

Similarly, for $\mK_n^{1,2}$ and $n\equiv 1$ $({\rm mod}\  3)$, we know that player $A$ wins by removing the vertex $u_2$. We do not know if
player $A$ has a winning strategy for chomp on $\mK_n^{1,i}$ when $n\equiv 1$ $({\rm mod}\  3)$ and $i\equiv 2$ $({\rm mod}\  3)$.

For chomp on $\mK_n^{1,i}$ when $n\equiv 2$ $({\rm mod}\  3)$ and $i\equiv 0$ $({\rm mod}\  3)$, we have a different situation. Player $B$ wins when $i=0$ (see Table $(c)$) but loses when $i=3$ since player $A$ can remove edge $u_2u_3$, the resulting graph is $\mK_{n-2}^{1,1}$ and $\ch(\mK_{n-2}^{1,1})=\ch(\mK_{n-2})$. Hence, one may conclude that $\ch(\mK_n^{1,i})$  does not depend on the parameters of the graph modulo $3$ when $n\equiv 2$ $({\rm mod}\  3)$ and $i\equiv 0$ $({\rm mod}\  3)$. However, we do not know what happens when the parameters $1,i$ are in increasing order, i.e., for $i\geq 1$?. 

More generally, we wonder the following: for $i_1\leq \ldots\leq i_k$, does the outcome of chomp on the threshold graph $\mK_n^{i_1,\ldots,i_k}$
can be described only in terms of the parameters $i_1,\ldots,i_k,n$  modulo $3$?


\section{Conclusions}\label{sec:conc}
We have enlarged the graph families for which Nim-values or winning-strategies for chomp are known. The strongest results could be obtained in graphs (and their clique complexes) with symmetries such as generalized Kneser graphs and certain Johnson graphs. However, our method cannot be used for some families of Johnson graphs. We think the next class to attack here are Johnson graphs of the form $\mathcal{J}(2k+1,2)$, i.e., line graphs of odd complete graphs.


Finally, we determined the chomp value of some threshold graphs. We observed some patterns depending only on the parameters of the graph modulo three.  Could it be true that for $i_1\leq \ldots\leq i_k$, the value of $\ch(\mK_n^{i_1,\ldots,i_k})$ only depends on $(n \mod 3, i_1\mod 3,\ldots, i_k \mod 3)$?


\section*{Acknowledgements}

L.P.M. has been supported by the Mexican National Council on Science and Technology (C\'atedras-CONACYT). K.K. has been supported by ANR projects GATO: ANR-16-CE40-0009-01, DISTANCIA: ANR-17-CE40-0015, and CAPPS: ANR-17-CE40-0018. I. G. has been supported by Ministerio de
Econom\'ia y Competitividad,  Spain (MTM2016-78881-P). 

This research was initiated on a visit of I.G. and K.K. at CIMAT supported by UMI Laboratoire Solomon Leftschetz - LaSol - no. 2001  CNRS-CONACYT-UNAM, Mexique. Both I.G. and K.K. 
thank L.P.M. and Mireia Ferrer for their hospitality and the good time spent during the stay in Guanajuato.

Finally, we thank Cormac O'Sullivan for pointing out an error in a theorem about almost bipartite graphs stated in an earlier version of the paper.  


\end{document}